\definecolor{blau}{rgb}{0,0,0.75} 
\newtheorem{theorem}{Theorem}
\newtheorem{lemma}{Lemma}
\newtheorem{coroll}{Corollary}
\newtheorem{prop}{Proposition}
\newtheorem{conj}{Conjecture}
\theoremstyle{definition}
\newtheorem{remark}{Remark}
\newtheorem{example}{Example}
\newcommand{\N}{\ensuremath{\mathbb{N}}}
\newcommand{\R}{\ensuremath{\mathbb{R}}}
\newcommand{\Z}{\ensuremath{\mathbb{Z}}}
\begin{document}

\author{Markus Kuba}
\address{Markus Kuba\\
Institut f{\"u}r Diskrete Mathematik und Geometrie\\
Technische Universit\"at Wien\\
Wiedner Hauptstr. 8-10/104\\
1040 Wien, Austria} \email{kuba@dmg.tuwien.ac.at}

\title{On embedded trees and lattice paths}

\begin{abstract}
Bouttier, Di Francesco and Guitter introduced a method for solving certain classes of algebraic recurrence relations arising the context of embedded trees and map enumeration. The aim of this note is to apply this method to three problems.
First, we discuss a general family of embedded binary trees, trying to unify and summarize several enumeration results for binary tree families, and also to add new results. Second, we discuss the family of embedded $d$-ary trees, embedded in the plane in a natural way.
Third, we show that several enumeration problems concerning simple families of lattice paths can be solved without using the kernel method by regarding simple families of lattice paths as degenerated families of embedded trees.
\end{abstract}

\thanks{The author was supported by the Austrian Science Foundation FWF, grant S9608-N13.}
\keywords{Embedded trees, Labeled trees, Binary Trees, Plane Trees, Height of Plane trees, Lattice Paths, Vicious walkers, Osculating walkers}%

\maketitle

\section{Introduction}
Several families of embedded trees have been studied in the
literature. Binary trees, complete binary trees, several different
families of planar trees and more generally simply generated tree
families have been considered in a series of
papers~\cite{Boutt2003,Boutt2003II,Marck2004,Francesco2005,Bou2006,BouJan2006,Sva2006,Kne2006,DevSva2008,Alois2009,Drmota}:
it has been showed that embedded trees naturally arise in the
context of map enumeration and that properties of embedded trees are
closely related to a random measure called Integrated Superbrownian
Excursion. Combinatorial properties of embedded ternary trees where
studied using bijections between embedded ternary trees and
non-separable rooted planar maps~\cite{Schaeff1998,Left2000}, where
the authors studied a particular subclass of embedded ternary trees
named skew ternary trees~\cite{Schaeff1998}, or left ternary
trees~\cite{Left2000}, which are embedded ternary trees with no node
having label greater than zero. Using bijections between embedded
ternary trees with no label greater than zero and non-separable
rooted planar maps with $n + 1$ edges they obtained amongst others
an explicit result for the number of such trees of size $n$.
Some other enumerative results for embedded ternary trees
where derived in~\cite{Wo}. For the exact enumeration of embedded
trees and related problems Bouttier, Di Francesco and
Guitter~\cite{Boutt2003}, see also Di
Francesco~\cite{Francesco2005}, introduced a new method for solving
systems of recurrence relations. Bousquet-M\'elou~\cite{Bou2006} showed how this method can be used
to derive deep results about the enumeration of embedded binary
trees and families of embedded plane trees, and also about
properties of the Integrated Superbrownian Excursion. The aim of
this note is to continue the analysis of~\cite{Wo}. We use
generating functions and the method of~\cite{Boutt2003} to study a
general family of embedded binary trees, rederiving and unifying
several earlier results, and also the family of embedded $d$-ary
trees. Moreover, we show that some enumeration problems concerning simple families of
lattice paths, previously solved by Banderier and Flajolet~\cite{BandFla2002} using the kernel method, can be
treated using the method of~\cite{Boutt2003}. This work is
divided into three parts. The first part is devoted to the study of
a general family of binary trees embedded in the plane, summarizing
and rederiving a few of the enumerational results of~\cite{Boutt2003,Bou2006,Boutalk}.
The second part of this work is devoted to the study of embedded
$d$-ary trees. The third part is devoted to the enumeration of
lattice paths using the method of~\cite{Boutt2003,Francesco2005}, rederiving (and slightly extending) earlier results of Flajolet and
Banderier~\cite{BandFla2002}. Moreover, we use their method to (re-)derive 
other results. In particular, we derive the length
generating function of three vicious walkers and osculating walkers,
previously obtained earlier by Bousquet-M\'elou~\cite{oscu} using the kernel method, and Gessel.
In the next section we we recall some properties of the family of
$d$-ary trees and we discuss the (natural) embedding of $(2d+1)$-ary
trees and $(2d)$-ary trees into the plane. Section~\ref{Franc} is
devoted to a presentation of the method~\cite{Boutt2003,Francesco2005} following the exposition of Di
Francesco~\cite{Francesco2005}. Throughout this work we use the
notations $\N=\{1,2,\dots\}$, $\N_0=\{0,1,2,\dots\}$ and also
$\Z=\{\dots,-1,0,1,\dots\}$.

\section{The family of d-ary trees\label{DEEBNsecdary}}
The family of $d$-ary trees $\mathcal{T}$, with $d\ge 2$, can be described in a recursive way, which
says that a $d$-ary tree is either a leaf (an external node) or an internal node followed by $d$ ordered ternary trees, visually described by
the suggestive ``equation''
\begin{align*}
\label{DEEBaloissymb1}
\parbox{4cm}{
\unitlength 0.50mm
\linethickness{0.4pt}
\thicklines
\begin{picture}(78.33,22.00)
\put(64.33,1.67){\circle{6.57}}
\put(62.00,4.00){\line(-1,1){11}}
\put(66.67,4.00){\line(1,1){11}}
\put(42.00,14.33){\makebox(0,0)[cc]{+}}
\put(21.00,13.50){\makebox(0,0)[cc]{\large{$\Box$}}}
\put(50.33,20.00){\makebox(0,0)[cc]{$\mathcal{T}$}}
\put(64.33,20.00){\makebox(0,0)[cc]{$\dots$}}
\put(78.33,20.00){\makebox(0,0)[cc]{$\mathcal{T}$}}
\put(1.67,14.67){\makebox(0,0)[cc]{$\mathcal{T}\quad=\quad$}}
\end{picture}}
\end{align*}
Here $\text{\small{$\bigcirc$}}$ is the symbol for an internal node and
$\text{\large{$\Box$}}$ is the symbol for a leaf or external node.
The generating function $T(z)=\sum_{n\ge 0}T_n z^n$ of the number of $d$-ary trees of size $n$ satisfies the equation
\begin{equation}
\label{DEEBteReqn0yi}
T(z)=1+zT^d(z),\quad\text{with}\,\,T(0)=1.
\end{equation}
Concerning the series expansion of the generating function $T(z)$ it
is convenient consider the shifted series $\tilde{T}(z):=T(z)-1$.
This corresponds to discarding external nodes (the empty tree) in
the description above; we obtain simply generated $d$-ary trees
$\tilde{\mathcal{T}}$ ), defined by the formal equation
\begin{equation}
   \label{DEEBteReqn1}
   \tilde{\mathcal{T}} = \bigcirc \times \varphi(\tilde{\mathcal{T}}),\quad\text{with}\,\, \varphi(t)=(1+t)^d,
\end{equation}
with $\bigcirc$ a node, $\times$ the cartesian product, and
$\varphi(\tilde{\mathcal{T}})$ the substituted structure. We refer to~\cite{MeirMoon1978} for the general definition of simply generated trees.
Let $T_n$ denote the number of ternary trees of size $n$, and $\tilde{T}_n$ the number of simply generated ternary trees of size $n$. By the formal description above~\eqref{DEEBteReqn1} the counting series $\tilde{T}(z)=\sum_{n\ge 1}\tilde{T}_nz^n$ satisfies
the functional equation
\begin{equation}
\label{DEEBteReqn2}
\tilde{T}(z)=z(1+\tilde{T}(z))^d,\quad \tilde{T}(0)=0.
\end{equation}
Due to the Lagrange inversion formula, see e.g.~\cite{Gould1983}, the number of $d$-ary trees of size $n$ is given by the so-called Fuss-Catalan numbers
$C_{n,d}=\frac1{(d-1)n+1}\binom{dn}{n}$,
\begin{equation}
\label{DEEBteReqn3}
\tilde{T}_n=[z^n]\tilde{T}(z)=\frac1{(d-1)n+1}\binom{dn}{n},\quad\text{and consequently}\quad \tilde{T}(z)=\sum_{n\ge 1}\binom{dn}{n}\frac{z^n}{(d-1)n+1}.
\end{equation}
Note that due to the definition the series $T(z)$ and $\tilde{T}(z)$ are related by $T(z)=\tilde{T}(z)-1$.

\subsection{Embedded d-ary trees\label{DEEBNssecembdary}}
By definition of $d$-ary trees each internal node with no children
has exactly $d$ positions to attach a new node, which are as usual
called external nodes or leaves, see Figure~\ref{DEEBNfig1}. We
embed $d$-ary trees in the plane by distinguishing between the cases
of even and odd $d$, respectively. Equivalently, we can distinguish
between $(2d+1)$-ary trees and $2d$-ary trees, with $d\ge 1$. The
root node has position zero. We recursively define the embedding of
$(2d+1)$-ary and $2d$-ary trees as follows. For $(2d+1)$-ary trees
an internal node with label/position $j\in\Z$ has exactly $(2d+1)$
children, being internal or external, placed at positions $\pm d$,
$\pm (d-1)$, $\dots$, $\pm 0$. For $2d$-ary trees an internal node
with label/position $j\in\Z$ has exactly $2d$ children, being
internal or external, placed at positions $\pm (2d-1)$, $\pm
(2d-3)$, $\dots$, $\pm 1$.  Following~\cite{Bou2006}, we call these
embedding \emph{natural embedding} of $d$-ary trees, because the
label a node is its abscissa in the natural integer embedding of the
tree.

\begin{figure}[htb]
\centering
\includegraphics[angle=0,scale=0.7]{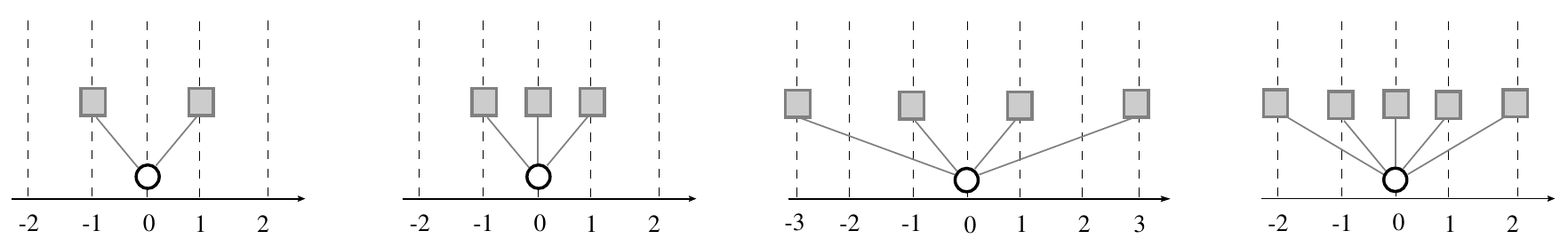}
\caption{Size one naturally embedded binary, ternary, quaternary and quinary trees together with their external nodes, i.e.~the possible increments $i\in\{\pm 1\}$, $i\in\{0,\pm 1\}$, $i\in\{\pm 1,\pm 3\}$ and $i\in \{0,\pm 1,\pm 2\}$.}
\label{DEEBNfig1}
\end{figure}

In this note we are interested in the number of embedded $d$-ary
trees having no label greater than $j$, with $j\in\N$. Let
$T_{j,2d+1}(z)$ and $T_{j,2d}(z)$ denote the generating function of
embedded $(2d+1)$-ary and $2d$-ary trees having no label greater
than $j$, $j\in\N$, with initial values
$T_{-1,2d+1}=T_{-2,2d+1}=\dots=T_{-d,2d+1}=1$ and
$T_{-1,2d+1}=T_{-2,2d+1}=\dots=T_{-2d+1,2d+1}=1$. Following the
observation of Bousquet-M\'elou we can think of $T_{j}(z)$ as the
generating function of embedded $d$-ary trees with \emph{root
labeled $j$}. By definition we obtain the following system of
recurrences for $T_j(z)$\footnote{Subsequently, we will usually drop
the subscripts $2d+1$ and $2d$ of $T_{j,2d+1}(z)$ and $T_{j,2d}(z)$
in order to simplify the presentation.}. For $(2d+1)$-ary trees we
get the system of recurrences
\begin{equation}
\label{DEEBNrec1a}
T_{j}(z)=1+z\prod_{\ell=-d}^{d}T_{\ell}(z),\quad j\ge 0,\quad\text{with}\,\,T_{-j}(z)=1,\quad\text{for}\,\,1\le j \le d,
\end{equation}
and for $2d$-ary trees we get the system of recurrences
\begin{equation}
\label{DEEBNrec1b}
T_{j}(z)=1+z\prod_{\ell=1}^{d}\Big(T_{2\ell-1}(z)T_{-2\ell+1}(z)\Big),\quad j\ge 0,\quad\text{with}\,\,T_{-j}(z)=1,\quad\text{for}\,\,1\le j \le 2d-1.
\end{equation}
Note that for both cases we have
\begin{equation*}
T_j(z)\to T(z)\quad\text{for}\quad j\to\infty,
\end{equation*}
in the sense of formal power series, where $T$ denotes the overall generating function~\eqref{DEEBteReqn0yi} of $(2d+1)$-ary and $2d$-ary trees, respectively. Note that this observation turns out to be crucial for the solution of the recurrence relation; see the original paper of Bouttier et al.~\cite{Boutt2003} and the next section. In the work~\cite{Wo} a different embedding for $2d$-ary trees is suggested. However, the embedding above for $2d$-ary trees turns out to be more easily analyzed and more natural, since the nodes are evenly placed in the plane.

\section{A method for solving infinite systems of algebraic recurrence relation \label{Franc}}
Bouttier, Di Francesco and Guitter introduced a method for solving certain classes of algebraic recurrence relations arising the context of embedded trees and map enumeration.
Our presentation of their method follows the exposition of Di Francesco~\cite{Francesco2005}. For a given integer $k\in\Z$ let $T_j(z)$, with $j\ge  k$, denote a family
of generating functions. Assume that the $T_j(z)$ satisfy algebraic
recurrence relations expressing $T_j(z)$ in terms of a finite number
of previous terms $T_{j-1}(z),T_{j-2}(z),\dots,T_{j-d}(z)$, with
$d\in\N$. The boundary data needed to entirely determine $T_j(z)$
should consist of $d$ consecutive initial values of $T_{j}(z)$.
Assume further that in the sense of formal power series $\lim_{j\to
\infty}T_j(z)$ exists, with $\lim_{j\to \infty}T_j(z)=T(z)$; note that $T(z)$ is also the solution of the unrestricted recurrence
relation for $T_j(z)$, holding for all $j\in\Z$. Exploiting the fact
that $\lim_{j\to \infty}T_j(z)=T(z)$ one uses the ansatz
$T_j(z)=T(z)(1-\rho_j(z))$, where $\rho_j(z)$ denotes an a priori
unknown formal power series with $\lim_{j\to\infty}\rho_j(z)=0$.
This allows to linearize the recurrence relations at large $j$, similar to first order asymptotic series expansion.

A first order expansion of the recurrence relation for $T_j(z)$ in terms of $\rho_j(z)$ leads to linear recurrence relations for $\rho_j(z)=\rho_j^{(1)}(z)$. It is readily solved using the classical ansatz $\rho_j^{(1)}(z)=\alpha \cdot X^j$, with unspecified $\alpha$. We can deduce that the general solution of the linearized recurrence relation is given by $\rho_j^{(1)}(z)=\sum_{\ell=1}^{d}\alpha_{\ell} \cdot X_{\ell}^j$,
where the $X_{\ell}(z)$, with $1\le \ell\le d$, are all solutions with modulus less one of the characteristic equation of the linear recurrence relation for the first order approximation $\rho_j(z)=\rho_j^{(1)}(z)$. In order to obtain the solution of the original problem one uses a full asymptotic series expansion
of the recurrence relation for $T_j(z)$ in terms of $\rho_j(z)=\sum_{n_1,\dots,n_d\ge 0}
\alpha_{n_1,\dots,n_d}\alpha_{n_1,\dots,n_k}\prod_{\ell=1}^{d}\big(X_{\ell}^{j}\big)^{n_{\ell}}$ and
compares order by order the contributions to the true solution. We recursively obtain the unspecified coefficients $\alpha_{n_1,\dots,n_k}$,
usually depending on $X_{\ell}(z)$, $1\le \ell \le d$, with free parameters $\alpha_{\mathbf{e}_{\ell}}$, where $\mathbf{e}_{\ell}$ denotes the $\ell$-th unit vector.

The \emph{main difficulty} is solve the recurrence relation for the coefficients $\alpha_{\mathbf{n}}=\alpha_{n_1,\dots,n_k}$.
Once these recurrence relations are solved, one can hopefully derive a compact expression for $\rho_j(z)$ and subsequently adapt
the unspecified parameters $\alpha_{\mathbf{e}_{\ell}}$, $1\le \ell \le d$, to the initial conditions $T_{j-1}(z),T_{j-2}(z),\dots,T_{j-d}(z)$.

\section{General families of embedded binary trees\label{SECbinaer}}
The family of ordinary (incomplete) binary trees $\mathcal{T}_1$, enumerated by the Catalan numbers, whose counting series $T=T(z)=\sum_{T\in\mathcal{T}_1}z^{|T|}$
satisfies the functional equation
\begin{equation*}
T(z)=1+zT(z)^2.
\end{equation*}
Bousquet-M\'elou~\cite{Bou2006} considered the embedding of this tree family in the plane according to
\begin{equation*}
T_j(z)=1+zT_{j-1}(z)T_{j+1}(z),\quad j\in\Z.
\end{equation*}
Here $T_j(z)$ denotes the generating function of a tree with root at position $j\in\Z$. Bouttier et al.~\cite{Boutt2003,Boutt2003II} and Bousquet-M\'elou~\cite{Bou2006} studied two families $\mathcal{T}_2$ and $\mathcal{T}_3$ of embedded plane trees which are closely related to families of maps. They can be realised as certain families of embedded binary trees.  Let $F=F(z)=\sum_{T\in\mathcal{T}_2}z^{|T|}$ and $G=G(z)=\sum_{T\in\mathcal{T}_3}z^{|T|}$ denote the counting series of the families
$\mathcal{T}_2$ and $\mathcal{T}_3$, satisfying the functional equations
\begin{equation*}
F(z)=\frac{1}{1-2zF(z)},\qquad G(z)=\frac{1}{1-3zG(z)},
\end{equation*}
or equivalently,
\begin{equation*}
F(z)=1+2zF(z)^2,\qquad G(z)=1+3zG(z)^2.
\end{equation*}
These tree families are embedded according to
\begin{equation*}
F_j(z)=\frac{1}{1-z\big(F_{j-1}(z)+F_{j+1}(z)\big)},\quad
G_j(z)=\frac{1}{1-z\big(G_{j-1}(z)+G(z) +G_{j+1}(z)\big)}, j\in\Z,
\end{equation*}
or equivalently by
\begin{equation*}
F_j(z)=1+zF_j(z)\big(F_{j-1}(z)+F_{j+1}(z)\big),\quad G_j(z)=1+zG_j(z)\big(G_{j-1}(z)+G_j(z)+G_{j+1}(z)\big).
\end{equation*}
For the three tree families $\mathcal{T}_1$, $\mathcal{T}_2$ and $\mathcal{T}_3$ it was shown that the generating functions of trees with small labels, i.e.~tree in which all labels are less or equal $j$, are algebraic and explicit expressions were obtained.

\subsection{Embedding of a general family of binary trees}
We discuss properties of the family $\mathcal{T}$ of weighted binary trees, defined according to a functional equation for its counting series $T=T(z,v_1,v_2,w_1,w_2,w_3)=\sum_{G\in\mathcal{T}}z^{|G|}$,
\begin{equation*}
T=1+z(2v_1+v_2)T+z(w_1+w_2+2w_3)T^2.
\end{equation*}
We can interpret $v_1,v_2,w_1,w_2,w_3$ either as weights, $v_1,v_2,w_1,w_2,w_3\ge 0$, or as variables encoding different kinds of nodes, which would lead to a refined enumeration of trees. Concerning the second point of view one could for example consider $[z^nv_1^{m_1}v_2^{m_2}w_1^{\ell_1}w_2^{\ell_2}w_3^{\ell_3}]T$, with $m_1+m_2+\ell_1+\ell_2+\ell_3=n$.
By solving the quadratic equation for $T$ one easily obtains the following explicit result.
\begin{equation}
\label{DEEBNbinaer1}
T=\frac{1-z(2v_1+v_2)-\sqrt{\big(1-z(2v_1+v_2)\big)^2-4z(w_1+w_2+2w_3)}}{2z(w_1+w_2+2w_3)}.
\end{equation}
We reobtain the previously considered families and several other tree families, binary and non-binary, by suitable sometimes non-unique choices of $v_1,v_2$ and $w_1,w_2,w_3$.
\begin{example}
Binary trees (Catalan numbers) \textbf{A000108} are obtained by setting $v_1=v_2=w_2=w_3=0$ and $w_1=1$,
the number of rooted Eulerian edge maps in the plane \textbf{A052701} are obtained by setting $v_1=v_2=w_1=w_2=0$ and $w_3=1$,
Blossom trees or equivalently rooted planar maps \textbf{A005159} are obtained by setting $v_1=v_2=w_1=0$ and $w_2=w_3=1$,
Schr\"oder trees (large Schr\"oder numbers) \textbf{A006318} can be obtained setting $v_2=w_2=w_3=0$ and $v_1=w_1=1$,
planar rooted trees with tricolored end nodes \textbf{A047891} can be obtained setting $v_1=w_2=w_3=0$ and $v_2=w_1=1$,
the choice $v_1=v_2=w_1=1$ and $w_2=w_3=0$ gives sequence \textbf{A082298},
the choice $v_1=w_3=1$ and $v_2=w_1=w_2=0$ gives sequence \textbf{A103210}; several other sequences in Sloane's Encyclopedia~\cite{Sloane} can be obtained by suitable choices of the parameters.
\end{example}

We embed this family according to the following recurrence relation
for $T_j=T_j(z,v_1,v_2,w_1,w_2,w_3)$.
\begin{equation}
\label{DEEBNgenrec1}
T_j=1+z\big(v_1T_{j-1}+ v_1T_{j+1} +v_2T_j \big)+ z\Big(w_1T_{j-1}T_{j+1}+w_2T_{j}^2 + w_3T_{j}\big(T_{j-1}+T_{j+1}\big)\Big),
\end{equation}
with $j\in\Z$. We will see that we can reobtain the previously discussed families $\mathcal{T}_1$, $\mathcal{T}_2$ and $\mathcal{T}_3$ and their counting series by the following choices of the weights/variables $v_1,v_2,w_1,w_2,w_3$: $T(z,0,0,w_1,0,0)$, $T(z,0,0,0,w_2,w_2)$ and $T(z,0,0,0,0,w_2)$.

\smallskip

We will show that for several choices of the weights $w_j$ and arbitrary weights $v_i$ the generating functions of trees with small labels in the embedded family $\mathcal{T}$, i.e.~tree in which all labels are less or equal $j$, can be explicitly obtained.

\subsection{Trees with small labels}
Our starting point is the recurrence relation below for $T_j$.
\begin{equation}
\begin{split}
\label{DEEBNeqn1}
T_j=1+z\big(v_1T_{j-1}+ v_1T_{j+1} +v_2T_j \big)+ z\Big(w_1T_{j-1}T_{j+1}+w_2T_{j}^2 + w_3T_{j}\big(T_{j-1}+T_{j+1}\big)\Big),
\end{split}
\end{equation}
for $j\ge 0$ with initial value given by $T_{-1}=1$ or $T_{-1}=0$,
depending on particular counting problem,
see~\cite{Boutt2003,Boutt2003II,Francesco2005,Bou2006}.
Following the approach presented in Section~\ref{Franc} we use that fact that for $j$
tending to infinity we have $T_j\to T$ in the sense of formal power
series, with $T$ given by~\eqref{DEEBNbinaer1}. We make the
\emph{ansatz} $T_j=T(1-\rho_j)$, where $T=T(z,v_1,v_2,w_1,w_2,w_3)$
denotes the generating function of the family $\mathcal{T}$ defined
by~\eqref{DEEBNbinaer1}, with $\rho_j\to 0$ as $j$ tends to
infinity. We expend Equation~\ref{DEEBNeqn1} with respect to the
ansatz and compare the terms tending at a similar rate to zero in
the asymptotic expansion of $T_j$ as $j$ tends to infinity, neglecting terms $\rho_j^2$, $\rho_j\rho_{j+1}$ and $\rho_j\rho_{j-1}$. We get
the linearized equation
\begin{equation*}
-T\rho_j=-zT\big( v_1(\rho_{j-1}+\rho_{j+1})+v_2\rho_j\big) - zT^2\big(w_1(\rho_{j-1}+\rho_{j+1}) + 2w_2\rho_j + w_3(\rho_{j-1}+2+\rho_j+\rho_{j+1})\big).
\end{equation*}
Now we make a refined ansatz $\rho_j=X^j$ in order to solve this linear recurrence relation for $\rho_j$, assuming that $X$ is a formal power series depending on variables/weights
$z,v_1,v_2,w_1,w_2,w_3$ with $|X|<1$. We obtain the so-called characteristic equation for the series $X$,
\begin{equation}
\label{DEEBNeqn2}
1=z\Big( v_1\big(\frac1X+X\big)+v_2\Big) + zT\Big(w_1(\frac1X+X\big) + 2w_2 + w_3\big(\frac1X+2+X\big)\Big).
\end{equation}
We observe that $X$ is a power series in $z,v_1,v_2,w_1,w_2,w_3$ and has non-negative coefficients. Consequently, the proper solution is given by
\begin{equation}
\label{DEEBNeqnX}
X=\frac{1-z(v_2+2T(w_2+w_3))-\sqrt{\big(1-z(v_2+2T(w_2+w_3))\big)^2-4z^2(v_1+T(w_1+w_3))^2}}{2z(v_1+T(w_1+w_3))}.
\end{equation}
One readily checks that the expression above for $X$ is indeed a power series in $z,v_1,v_2,w_1,w_2,w_3$ and has non-negative coefficients.
Using the definition of the series $T$ we can express $T$ solely in terms of the series $X$
\begin{equation*}
T=\frac{t_1(X)+\sqrt{t_1(X)^2 +4t_2(X)\big(v_1(1+X^2)+v_2X\big)}}{t_2(X)},
\end{equation*}
with respect to the polynomials $t_1(X)=t_1(v_1,w_1,w_2,w_3,X)$ and $t_2(X)=t_2(w_1,w_2,w_3,X)$ defined by
\begin{equation*}
t_1(X)=w_1(1+X^2)+2w_2X+w_3(1+X)^2-v_1(1-X)^2,\qquad t_2(X)=w_1(1-X+X^2)+w_2X+w_3(1+X^2).
\end{equation*}
We make the more refined ansatz $\rho_j=\sum_{i\ge
1}\alpha_i(X^{j})^{i}$, with unspecified $\alpha_1$ and
$\alpha_i=\alpha_i(X)$, which amounts to an asymptotic expansion of
$\rho_j$ for $j$ tending to infinity. Next we compare the terms with
the same order of magnitude in~\eqref{DEEBNeqn1} as $j$ tends
infinity. We obtain from~\eqref{DEEBNeqn1}, using the
relation~\eqref{DEEBNeqn2}, the following recurrence relation for
$\alpha_{n+1}$, with $n\ge 0$.
\begin{equation}
\label{DEEBNeqn3}
\alpha_{n+1}\Big(\frac{v_1}{T}+w_1+w_3\Big)\Big(\frac{1}{X^{n+1}}+X^{n+1}-\frac{1}{X}-X\Big)
=\sum_{i=1}^{n}\alpha_i\alpha_{n+1-i}\Big(w_1X^{n+1-2i} +w_2 +w_3\Big(\frac{1}{X^{i}}+X^{i}\Big)\Big).
\end{equation}
We observe that the variable $v_2$ only appears in the defining equations for series $T$ and $X$, but not in the recurrence relation for $\alpha_{n}$.
Introducing the quantity $\beta_{n+1}=\alpha_{n+1}\big(\frac{v_1}{T}+w_1+w_3\big)^n$, $n\ge 0$, we obtain the simplified recurrence relation
\begin{equation}
\label{DEEBNeqn4}
\beta_{n+1}\Big(\frac{1}{X^{n+1}}+X^{n+1}-\frac{1}{X}-X\Big)
=\sum_{i=1}^{n}\beta_i\beta_{n+1-i}\Big(w_1X^{n+1-2i} +w_2 +w_3\Big(\frac{1}{X^{i}}+X^{i}\Big)\Big).
\end{equation}
Let $f(t)$ denote the formal power series $f(t)=\sum_{n\ge 1}\beta_n t^n$. Equation~\ref{DEEBNeqn4} is equivalent to a functional equation
for $f(t)$:
\begin{equation*}
f(tX)+f\Big(\frac{t}{X}\Big) - \Big(\frac{1}{X}+X\Big)f(t) = w_1f(tX)\cdot f\Big(\frac{t}{X}\Big)
+w_2f(t)^2 +w_3f(t)\cdot \Big( f(tX)+f\Big(\frac{t}{X}\Big)  \Big).
\end{equation*}
One already knows the solutions of Equation~\ref{DEEBNeqn4} in the cases $w_2=w_3=0$, see Bousquet-M\'elou~\cite{Bou2006},
and $w_1=0$, $w_1=w_2=0$, see Bouttier et al.~\cite{Boutt2003} and also~\cite{Bou2006}.
We will provide the solution of Equations~\ref{DEEBNeqn3} and~\ref{DEEBNeqn3}, respectively, in the case $w_1$ and $w_2=w_3$, excluding the degenerate case $w_1=w_2=w_3=0$.
\begin{lemma}
\label{DEEBNlem1}
For given parameters $w_1$ and $w_2=w_3$, excluding the degenerate case $w_1=w_2=w_3=0$, the solution $\alpha_{n}$ of the recurrence relation~\ref{DEEBNeqn3} is for $n\ge 1$ given by
\begin{equation*}
\alpha_n= \frac{X^{n-1}\alpha_1^n \big(w_1 X + w_2(1+X+X^2)\big)^{n-1}(1-X^n)}{\big(\frac{v_1}{T}+w_1+w_2\big)^{n-1}(1-X)^{2n-1}(1+X+X^2)^{n-1}(1+X)^{n-1}}.
\end{equation*}
\end{lemma}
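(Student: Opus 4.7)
The plan is to verify the closed form by strong induction on $n$, after one preliminary simplification. First, following the substitution used to derive~\eqref{DEEBNeqn4}, set $\beta_n := \alpha_n(v_1/T + w_1 + w_2)^{n-1}$; with $w_2 = w_3$ the claim reduces to
$$\beta_n \;=\; \frac{\alpha_1}{1-X}\,(1-X^n)\, r^{n-1}, \qquad r := \frac{\alpha_1\, X A}{(1-X^2)(1-X^3)}, \qquad A := w_1 X + w_2(1+X+X^2).$$
The only algebraic observation needed on the left-hand side of~\eqref{DEEBNeqn4} is the factorisation $X^{-(n+1)} + X^{n+1} - X^{-1} - X = (1-X^n)(1-X^{n+2})/X^{n+1}$. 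The case $n=1$ is immediate, since the formula then gives $\beta_1 = \alpha_1$.

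For the induction step, I insert the inductive formulas for $\beta_i$ and $\beta_{n+1-i}$ on the right-hand side of~\eqref{DEEBNeqn4} and the conjectured formula for $\beta_{n+1}$ on the left-hand side. Common powers of $r$, $\alpha_1$, $X$ and the factors $1/(1-X)$ cancel, and the step reduces to the single scalar identity
$$\sum_{i=1}^{n} (1-X^i)(1-X^{n+1-i})\Bigl[w_1 X^{n+1-2i} + w_2\bigl(1 + X^i + X^{-i}\bigr)\Bigr] \;=\; \frac{A(1-X)(1-X^n)(1-X^{n+1})(1-X^{n+2})}{X^n(1-X^2)(1-X^3)}.$$
Since $A(1-X) = w_1 X(1-X) + w_2(1-X^3)$, both sides are linear in $(w_1,w_2)$, and the identity splits into two independent identities, one for each weight.

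Each of these two identities is a finite geometric-type sum. Expanding $(1-X^i)(1-X^{n+1-i}) = 1 - X^i - X^{n+1-i} + X^{n+1}$, every inner sum has the form $\sum_{i=1}^{n} X^{a+bi}$ with $b\in\{-3,-2,-1,1\}$, each evaluated in closed form via $\sum_{i=1}^{n}y^i = y(1-y^n)/(1-y)$. Reassembling the pieces, the $w_1$-part produces the factor $X(1-X)(1-X^n)(1-X^{n+1})(1-X^{n+2})/[X^n(1-X^2)(1-X^3)]$ and the $w_2$-part produces the companion factor $(1-X^3)(1-X^n)(1-X^{n+1})(1-X^{n+2})/[X^n(1-X^2)(1-X^3)]$, matching the right-hand side. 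I expect this bookkeeping to be the main obstacle: no conceptual subtlety is involved, but the grouping of geometric sums has to be done carefully so that the factors $(1-X^n)$, $(1-X^{n+1})$ and $(1-X^{n+2})$ emerge simultaneously.

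An alternative and more conceptual route avoids the explicit sums. One translates the recurrence for $\beta_n$ into the functional equation
$$f(tX) + f(t/X) - (X+X^{-1})f(t) \;=\; w_1\, f(tX)f(t/X) + w_2\, f(t)^2 + w_2\, f(t)\bigl(f(tX)+f(t/X)\bigr)$$
for the generating function $f(t) := \sum_{n\ge 1}\beta_n t^n$, and checks that the rational ansatz $f(t) = \alpha_1 t/[(1-tr)(1-Xtr)]$ satisfies this equation by clearing denominators, which becomes a polynomial identity in $t$ with coefficients depending on $X$, $w_1$, $w_2$ (and is uniquely determined by matching the coefficient of $t$). Extracting $[t^n]f(t) = \alpha_1 r^{n-1}(1-X^n)/(1-X)$ then yields the formula for $\beta_n$ at once, and unwinding $\alpha_n = \beta_n/(v_1/T+w_1+w_2)^{n-1}$ completes the proof.
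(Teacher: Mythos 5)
Your proposal is correct and follows essentially the same route as the paper: the paper guesses the closed form (by computer experiments) and then asserts that it is ``readily rigorously checked'' against the recurrence~\eqref{DEEBNeqn3}, equivalently~\eqref{DEEBNeqn4}, which is exactly the verification you carry out. Your write-up in fact supplies the detail the paper omits --- the reduction to the explicit geometric-sum identity (where the constant-in-$i$ terms cancel, so every remaining inner sum is genuinely geometric) and the closed rational form $f(t)=\alpha_1 t/\big((1-tr)(1-Xtr)\big)$ of the generating function --- so nothing further is needed.
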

We could not solve directly the functional equation for $f(t)$. Instead we obtained the solution in an \emph{experimental way} using the computer algebra software \texttt{Maple}. Once the solution of the recurrence relation is guessed,
it is readily rigourously checked that it satisfies the recurrence relation~\eqref{DEEBNeqn3}, or equivalently that the generating function
$f(t)=\sum_{n\ge 1}\beta_n t^n$ satisfies the stated functional equation.
Unfortunately, we could not solve the recurrence relation in full generality $w_2\neq w_3$, except for the already known special case $w_1=w_2=0$ and $w_3\neq 0$~\cite{Boutt2003,Bou2006}; it is given by
\begin{equation*}
\alpha_n= \frac{X^{n-1}\alpha_1^n w_3^{n-1}(1-X^{2n})}{\big(\frac{v_1}{T}\big)^{n-1}(1-X)^{2n-1}(1+X+X^2)^{n-1}(1+X)}.
\end{equation*}
However, the result of Lemma~\ref{DEEBNlem1} already covers and generalizes the result for two previously treated families, the cases
$v_1=v_2=w_2=0$ of binary trees and $v_1=v_2=w_1=0$ of a family of planar trees, which we interpret as embedded binary trees.
It seems that the structure of the values $\alpha_n$ is not regular in the other cases. We performed some computer experiments and
we state the following conjecture on the values of $\alpha_{n}$ for $w_2=0$ and $w_1=w_3=1$.
\begin{conj}
In the case $w_2=0$ and $w_1=w_3=1$ the solution $\alpha_{n}$ of the recurrence relation~\ref{DEEBNeqn3} is given by
\begin{equation*}
\alpha_n=\frac{\alpha_1^n X^{n-1}p_n(X)}{\big(\frac{v_1}{T}+2\big)^{n-1}(1-X)^{2n-2}(1+X)^{2\left\lfloor\frac{n-1}{2}\right\rfloor}(1+X^2)^{\left\lfloor\frac{n-1}{2}\right\rfloor}},
\end{equation*}
where the sequence of polynomials $(p_n(X))_{n\in\N}$ with initial values
\begin{equation*}
p_1(X)=1,\quad p_2(X)=1,\quad p_3(X)=X^4+2X^3+2X+1,
\end{equation*}
is for $n\ge 2$ recursively defined by
\begin{equation*}
\begin{split}
p_{2n}(X)=P_{2n-1}(X)-2X^2p_{2n-2}(X),\quad p_{2n+1}(X)=p_{n+2}(X)p_{n+1}(X)-4X^4p_{n}(X)p_{n-1}(X).
\end{split}
\end{equation*}
\end{conj}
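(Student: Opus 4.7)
The natural plan is to verify the conjectured formula by induction on $n$, substituting the explicit expression for $\alpha_n$ into the bilinear recurrence~\eqref{DEEBNeqn3} specialized to $w_1=w_3=1$, $w_2=0$. This mirrors the guess-and-verify strategy that succeeded for Lemma~\ref{DEEBNlem1} and for the case $w_1=w_2=0$, $w_3\ne 0$, so the technical machinery is already in place; the task reduces to showing that the proposed rational expression satisfies the specialized recurrence term by term.

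As a first step, I would specialize and normalize. Setting $w_1=w_3=1$, $w_2=0$ in~\eqref{DEEBNeqn3} and introducing renormalized coefficients $\gamma_n=\alpha_n\bigl(\tfrac{v_1}{T}+2\bigr)^{n-1}/\alpha_1^n$ absorbs all factors depending on $v_1$, $T$, $\alpha_1$, leaving the compact recurrence
\begin{equation*}
\gamma_{n+1}\Big(\frac{1}{X^{n+1}}+X^{n+1}-\frac{1}{X}-X\Big)=\sum_{i=1}^{n}\gamma_i\gamma_{n+1-i}\Big(X^{n+1-2i}+\frac{1}{X^i}+X^i\Big),
\end{equation*}
with $\gamma_1=1$. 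The conjecture reduces to proving $\gamma_n=X^{n-1}p_n(X)/\bigl[(1-X)^{2n-2}\bigl((1+X)^2(1+X^2)\bigr)^{\lfloor(n-1)/2\rfloor}\bigr]$. The left-hand coefficient simplifies pleasantly as $(1-X^n)(1-X^{n+2})/X^{n+1}$, which should match the factor $\Phi_n\Phi_{n+2}$ arising on the right, where $\Phi_k=1+X+\cdots+X^{k-1}$.

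Next, I would plug the ansatz into the normalized recurrence and clear denominators. Because the exponent $\lfloor(n-1)/2\rfloor$ is parity-dependent, a short calculation shows that the aggregate power of $(1+X)^2(1+X^2)$ in the product $\gamma_i\gamma_{n+1-i}$ is the same for all $i$ when $n$ is even, whereas for $n$ odd it takes two distinct values according to the parity of $i$. This is precisely what forces the two separate recursions appearing in the conjecture: for $n+1=2m$ the surviving identity after cancellation should be the linear combination
\begin{equation*}
p_{2m}(X)=p_{2m-1}(X)-2X^2p_{2m-2}(X),
\end{equation*}
while for $n+1=2m+1$ it should be the shifted bilinear product
\begin{equation*}
p_{2m+1}(X)=p_{m+2}(X)p_{m+1}(X)-4X^4p_m(X)p_{m-1}(X).
\end{equation*}

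The real obstacle is explaining why a full bilinear convolution sum collapses in these two strikingly different ways. In the even case a quadratic sum must reduce, after massive cancellation, to a linear two-term expression; in the odd case it must factor as a difference of two products carrying non-adjacent, shifted indices $m{+}2,m{+}1,m,m{-}1$, a pattern strongly reminiscent of Somos sequences and the Laurent phenomenon. A promising alternative route that may clarify why these specific indices appear is to analyze the functional equation
\begin{equation*}
f(tX)+f(t/X)-\Big(X+\tfrac{1}{X}\Big)f(t)=f(tX)f(t/X)+f(t)\bigl(f(tX)+f(t/X)\bigr),
\end{equation*}
for $f(t)=\sum_{n\ge 1}\beta_nt^n$ directly, perhaps via a rational substitution designed to symmetrize the right-hand side under $X\leftrightarrow 1/X$, in the hope of finding a closed-form parametrization that makes the parity split transparent and the shifted bilinear structure geometrically inevitable.
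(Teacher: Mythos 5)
You should first note the status of this statement in the paper: it is stated as a \emph{conjecture}, supported only by computer experiments with \texttt{Maple}; the paper contains no proof of it, unlike Lemma~\ref{DEEBNlem1}, whose guessed formula can at least be checked rigorously against the recurrence. So there is no ``paper proof'' for your attempt to match, and the relevant question is whether your proposal actually closes the gap the author left open. It does not. Your preparatory steps are sound: the normalization $\gamma_n=\alpha_n\bigl(\tfrac{v_1}{T}+2\bigr)^{n-1}/\alpha_1^{n}$ is correct, the left-hand factor does simplify to $(1-X^{n})(1-X^{n+2})/X^{n+1}$, and the parity computation of $\bigl\lfloor\tfrac{i-1}{2}\bigr\rfloor+\bigl\lfloor\tfrac{n-i}{2}\bigr\rfloor$ correctly explains why one may expect a parity split. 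But the decisive step --- proving that, after clearing denominators, the convolution $\sum_{i=1}^{n}\gamma_i\gamma_{n+1-i}\bigl(X^{n+1-2i}+X^{-i}+X^{i}\bigr)$ built from the conjectured $p_i$ collapses exactly to the conjectured $\gamma_{n+1}$ with $p_{n+1}$ given by the two stated recursions --- is precisely what you call ``the real obstacle'' and leave unproved. Saying the identity ``should'' reduce to $p_{2m}=p_{2m-1}-2X^{2}p_{2m-2}$ and $p_{2m+1}=p_{m+2}p_{m+1}-4X^{4}p_{m}p_{m-1}$ is a restatement of the conjecture in normalized variables, not a derivation; an induction on $n$ cannot even be set up until that family of polynomial identities for the $p_n$ is established, since the conjectured recursions define $p_{n+1}$ independently of the convolution and the whole content of the claim is that the two definitions agree.

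The alternative route you sketch (a symmetrizing substitution in the functional equation for $f(t)$, motivated by the Somos-like shifted bilinear structure) is a plausible research direction, but it is offered only ``in the hope of finding'' a parametrization, with no candidate substitution and no argument; note also that your displayed functional equation omits the $w_2 f(t)^2$ term only because $w_2=0$ here, but retains both the $f(tX)f(t/X)$ and the $f(t)\bigl(f(tX)+f(t/X)\bigr)$ terms, and it is exactly the interaction of these two bilinear terms that produces the irregular structure the author could not resolve. In short: your proposal is a reasonable verification \emph{plan} whose routine parts are correct, but the statement remains, after your attempt as after the paper's, a conjecture.
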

We return to our previous case of $w_1$ and $w_2=w_3$.
In order to simplify the presentation we set
\begin{equation*}
\alpha_1= \frac{\big(\frac{v_1}{T}+w_1+w_2\big)(1-X^2)(1-X^3)}{\big(w_1X+w_2(1+X+X^2)\big)}\cdot\lambda,\quad\text{with}
\quad\lambda=\lambda(X,v_1,w_1,w_2),
\end{equation*}
and obtain the following result.
\begin{theorem}
\label{DEEBNthe1}
For given parameters $w_1$ and $w_2=w_3$, excluding the degenerate case $w_1=w_2=w_3=0$, a solution of the recurrence relation~\ref{DEEBNgenrec1} 
with free parameter $\lambda$ is given by
\begin{equation*}
T_j=T\cdot\Big(1-\frac{\big(\frac{v_1}{T}+w_1+w_2\big)\lambda(1-X^2)(1-X^3)X^j}{\big(w_1X+w_2(1+X+X^2)\big)(1-\lambda X^{j+1})(1-\lambda X^{j+2})}\Big),
\end{equation*}
with series $X$ given by~\eqref{DEEBNeqnX}.
\end{theorem}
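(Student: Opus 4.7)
The plan is to use the ansatz $T_j = T(1-\rho_j)$ with $\rho_j = \sum_{n\ge 1}\alpha_n X^{jn}$ from Section~\ref{Franc}, substitute the closed form for $\alpha_n$ provided by Lemma~\ref{DEEBNlem1}, and resum the resulting series explicitly. Because Lemma~\ref{DEEBNlem1} has already solved the recurrence~\eqref{DEEBNeqn3} (the conceptually hard step), the remainder is purely an algebraic simplification.

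First I would rewrite the denominator in Lemma~\ref{DEEBNlem1} in factored form. Regrouping powers of $(1-X)$ and applying $(1-X)(1+X) = 1-X^2$ and $(1-X)(1+X+X^2) = 1-X^3$ yields
\begin{equation*}
(1-X)^{2n-1}(1+X+X^2)^{n-1}(1+X)^{n-1} = (1-X)(1-X^3)^{n-1}(1-X^2)^{n-1}.
\end{equation*}
Substituting the stated parametrisation $\alpha_1 = \frac{(v_1/T+w_1+w_2)(1-X^2)(1-X^3)}{w_1X+w_2(1+X+X^2)}\,\lambda$, the powers $(1-X^2)^{n-1}$, $(1-X^3)^{n-1}$ and $(w_1X+w_2(1+X+X^2))^{n-1}$ cancel between $\alpha_1^n$ and the remaining factors, collapsing the lemma to the compact form
\begin{equation*}
\alpha_n \;=\; \frac{(\tfrac{v_1}{T}+w_1+w_2)(1-X^2)(1-X^3)}{(w_1X+w_2(1+X+X^2))(1-X)}\cdot \lambda^n X^{n-1}(1-X^n).
\end{equation*}

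The resummation of $\rho_j$ then reduces to evaluating the elementary series
\begin{equation*}
\sum_{n\ge 1}\lambda^n X^{(j+1)n}(1-X^n) \;=\; \frac{\lambda X^{j+1}}{1-\lambda X^{j+1}}-\frac{\lambda X^{j+2}}{1-\lambda X^{j+2}}\;=\;\frac{\lambda X^{j+1}(1-X)}{(1-\lambda X^{j+1})(1-\lambda X^{j+2})},
\end{equation*}
where the last step is a one-line partial-fraction combination. Multiplying by the prefactor, cancelling the common factor $(1-X)$, and absorbing the stray $X^{-1}$ into $X^{j+1}$ produces exactly the bracketed expression in the statement, and $T_j=T(1-\rho_j)$ yields the claimed formula.

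The main obstacle has already been absorbed by Lemma~\ref{DEEBNlem1}; what is left is bookkeeping. The two points I would still verify are formal: (i) all the geometric series above converge in the $X$-adic topology on the weight ring, which is immediate because $X$ as defined by~\eqref{DEEBNeqnX} has positive valuation in $z$; and (ii) the resulting $T_j$ satisfies~\eqref{DEEBNgenrec1} for each $j\ge 0$, which is automatic from the asymptotic matching carried out in Section~\ref{Franc} together with~\eqref{DEEBNeqn3}. The parameter $\lambda$ is free at this stage and is intended to be fixed later by the boundary condition ($T_{-1}=1$ or $T_{-1}=0$, depending on the counting problem).
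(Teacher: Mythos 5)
Your proposal is correct and follows essentially the same route as the paper: the paper's (largely implicit) proof of Theorem~\ref{DEEBNthe1} is exactly to take the closed form of Lemma~\ref{DEEBNlem1}, insert the stated parametrisation of $\alpha_1$ so that the $(n-1)$-st powers cancel, and resum $\rho_j=\sum_{n\ge 1}\alpha_n X^{jn}$ as a difference of two geometric series, which is precisely your computation (and your factorisation of the denominator and the telescoping identity check out). Your closing remarks (i)--(ii) on formal convergence and on the fact that satisfying~\eqref{DEEBNeqn3} order by order is what makes $T_j=T(1-\rho_j)$ a solution of~\eqref{DEEBNgenrec1} correspond to the paper's ``readily checked'' verification of the guessed $\alpha_n$, so nothing essential is missing.
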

Now we can easily reobtain the previous results of~\cite{Boutt2003,Bou2006} by suitable choices of $v_1,v_2,w_1,w_2,w_3$ and adapting $\lambda$ to the initial value $T_{-1}$. The quadratic equation relating $\lambda$ and $T_{-1}$ normally has two distinct solutions; we use the fact
that $T_j$ a priori has a power series expansion at $z=0$ to identify the right solution.
\begin{coroll}[\cite{Boutt2003,Bou2006}]
In the case of embedded binary trees, $v_1=v_2=w_2=w_3=0$ and $w_1=1$ with $T_{-1}=1$, we reobtain the result
\begin{equation*}
T_j=T\cdot\frac{(1-X^{j+2})(1-X^{j+7})}{(1- X^{j+4})(1- X^{j+5})},\quad j\ge -1.
\end{equation*}
In the case of embedded planar trees, $v_1=v_2=w_1=0$ and $w_2=w_3=1$ with $T_{-1}=0$, we reobtain the result
\begin{equation*}
T_j=T\cdot\frac{(1-X^{j+1})(1-X^{j+4})}{(1- X^{j+2})(1-X^{j+3})},\quad j\ge -1.
\end{equation*}
\end{coroll}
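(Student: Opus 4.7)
The plan is to deduce each case as a direct specialization of Theorem~\ref{DEEBNthe1}, with the free parameter $\lambda$ fixed by the initial condition on $T_{-1}$.

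For the embedded binary tree case $v_1=v_2=w_2=w_3=0$, $w_1=1$, the bracket in Theorem~\ref{DEEBNthe1} reduces, after cancelling a factor of $X$ from numerator and denominator, to
\begin{equation*}
T_j = T\cdot\Bigl(1 - \frac{\lambda (1-X^2)(1-X^3) X^{j-1}}{(1-\lambda X^{j+1})(1-\lambda X^{j+2})}\Bigr).
\end{equation*}
To fix $\lambda$ from $T_{-1}=1$ I would exploit the characteristic equation~\eqref{DEEBNeqn2}, which in this specialization gives $zT(1+X^2)=X$, together with the identity $(T-1)/T = zT$ coming from $T=1+zT^2$. The initial condition then rewrites as
\begin{equation*}
\frac{X}{1+X^2} = \frac{\lambda(1-X^2)(1-X^3)}{X^2(1-\lambda)(1-\lambda X)},
\end{equation*}
a quadratic in $\lambda$ solved by $\lambda=X^3$. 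Plugging this back in and expanding $(1-X^{j+4})(1-X^{j+5}) - X^{j+2}(1-X^2)(1-X^3) = (1-X^{j+2})(1-X^{j+7})$ yields the stated closed form.

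For the embedded plane tree case $v_1=v_2=w_1=0$, $w_2=w_3=1$, using $1-X^3=(1-X)(1+X+X^2)$ the bracket in Theorem~\ref{DEEBNthe1} simplifies to
\begin{equation*}
T_j = T\cdot\Bigl(1 - \frac{\lambda (1-X)(1-X^2) X^j}{(1-\lambda X^{j+1})(1-\lambda X^{j+2})}\Bigr).
\end{equation*}
Here the condition $T_{-1}=0$ reduces cleanly, since the overall factor $T$ cancels, to $X(1-\lambda)(1-\lambda X) = \lambda(1-X)(1-X^2)$, which factors to reveal $\lambda = X$ as a root. Substituting and expanding the numerator $(1-X^{j+2})(1-X^{j+3}) - X^{j+1}(1-X)(1-X^2) = (1-X^{j+1})(1-X^{j+4})$ gives the claim.

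The only subtle point is choosing the correct root of the quadratic in $\lambda$: the recurrence~\eqref{DEEBNgenrec1} forces $T_j(0)=1$, so $T_j(z)$ must be a formal power series in $z$; this requirement distinguishes $\lambda=X^3$ (respectively $\lambda=X$) from the spurious root, which would fail to produce such a power series (inspection of the lowest-order coefficient at $z=0$ rules it out). Once $\lambda$ is identified, the remaining work is a direct polynomial identity that can be verified by inspection.
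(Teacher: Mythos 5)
Your proposal is correct and follows essentially the same route as the paper: specialize Theorem~\ref{DEEBNthe1}, impose the initial condition $T_{-1}=1$ (resp.\ $T_{-1}=0$), which gives a quadratic in $\lambda$ whose admissible root ($\lambda=X^3$, resp.\ $\lambda=X$) is singled out by the requirement that $T_j$ be a power series at $z=0$, and then verify the resulting product identity. The algebraic details you supply (e.g.\ $zT(1+X^2)=X$, $(T-1)/T=zT$, and the two numerator factorizations) all check out and merely flesh out what the paper states in the remark preceding the corollary.
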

\begin{remark}
As mentioned above one can readily obtain numerous enumerative results from Theorem~\ref{DEEBNthe1}. The solutions turn out
to be usually more involved due to the adaption to initial values $T_{-1}=1$ or $T_{-1}=0$.
\end{remark}

\subsection{The height of planar trees}
A more general form of recurrence relation~\eqref{DEEBNeqn1} reads the following way.
\begin{equation*}
T_j=1+z\big(v_1T_{j-1}+v_2T_j+ v_3T_{j+1}  \big)+ z\Big(w_1T_{j-1}T_{j+1}+w_2T_{j}^2 + w_3T_{j}T_{j-1}+w_4T_jT_{j+1}\big)\Big).
\end{equation*}
It seems very difficult to obtain solutions for this recurrence relation.
However, there exist a subclass $v_3=w_1=w_2=w_4=0$, setting for the sake of simplicity $w_3=1$, which is explicitly solvable
\begin{equation}
\label{DEEBNknuth1}
T_j=1+z\big(v_1T_{j-1}+v_2T_j\big)+ zT_{j-1}T_{j},\quad\text{for}\quad j\ge 1.
\end{equation}
This was observed earlier by Bousquet-M\'elou~\cite{Boutalk}. This subclass is of particular importance due to the connection
with the height of plane trees~\cite{Knuth}, corresponding to the case $v_1=v_2=0$, with initial condition $T_0(z)=1$.
By the approach presented in Section~\ref{Franc} we use the fact that
\begin{equation*}
T_j\to T,\quad\text{with}\quad T=1+z(v_1+v_2)T+zT^2,
\end{equation*}
and the ansatz $T_j=T(1-\rho_j)$. This leads to
\begin{equation*}
-\rho_j T = -zT \big(v_1\rho_{j-1}+v_2\rho_{j}\big) - zT^2\big(\rho_{j-1}+\rho_{j}\big) + zT^2\rho_{j-1}\rho_{j}.
\end{equation*}
For the first order expansion we consider the terms tending at the same rate to zero as $j$ tends to infinity, neglecting the term $\rho_{j-1}\rho_{j}$. We get the linearized equation
\begin{equation*}
\rho_j =z \big(v_1\rho_{j-1}+v_2\rho_{j}\big)  + zT\big(\rho_{j-1}+\rho_{j}\big).
\end{equation*}
This recurrence relation is readily solved by $\rho_j=X^j$, with $X=X(z)$ given by
satisfying
\begin{equation*}
X=\frac{z(v_1+T)}{1-z(v_2+T)}.
\end{equation*}
As before he more refined ansatz $\rho_j=\sum_{i\ge
1}\alpha_i(X^{j})^{i}$, with unspecified $\alpha_1$ and
$\alpha_i=\alpha_i(X)$ leads to a recurrence relation for $\alpha_{n+1}$,
\begin{equation*}
\alpha_{n+1}\Big(\frac{v_1}{T}+1\Big)\frac{1-X^{n}}{X^{n+1}}=\sum_{i=1}^{n}\alpha_{i}\alpha_{n+1-i}\frac1{X^i},\quad n\ge 0.
\end{equation*}
Proceeding as before, we introduce the quantity $\beta_n=\alpha_n(\frac{v_1}{T}+1)^{n-1}$, $n\ge 1$ and solve the arising recurrence relation in an  experimental way using the computer algebra software \texttt{Maple}.
We obtain the solution
\begin{equation*}
\alpha_{n}=\frac{\alpha_1^{n}X^{n-1}}{\Big(\frac{v_1}{T}+1\Big)^{n-1}(1-X)^{n-1}},\quad n\ge 1,
\end{equation*}
with unspecified $\alpha_1$. In order to simplify the presentation we set $\alpha_1=\lambda(1-X)/(\frac{v_1}{T}+1)$ and obtain the following result.
\begin{theorem}[\cite{Boutalk}]
\label{DEEBNtheknuth}
A solution of the recurrence relation~\ref{DEEBNknuth1} with free parameter $\lambda$ is given by
\begin{equation*}
T_j=T\cdot\Big(1-\frac{(\frac{v_1}{T}+1)(1-X)\lambda X^j}{(1-\lambda X^{j+1})}\Big),
\end{equation*}
with $X=z(v_1+T)/(1-z(v_2+T))$.
\end{theorem}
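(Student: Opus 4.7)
The plan is to close out the derivation already begun in the paragraph preceding the theorem. That paragraph establishes three ingredients: the ansatz $T_j = T(1-\rho_j)$ with formal expansion $\rho_j = \sum_{n\ge 1}\alpha_n X^{jn}$, the defining equation for $X$, and the closed form
\[
\alpha_n \;=\; \frac{\alpha_1^n X^{n-1}}{\bigl(\tfrac{v_1}{T}+1\bigr)^{n-1}(1-X)^{n-1}},
\]
obtained via the reduction $\beta_n = \alpha_n(\tfrac{v_1}{T}+1)^{n-1}$ and an experimental guess with Maple. What remains is to sum the series for $\rho_j$ and to fix $\alpha_1$ so that the resulting expression for $T_j$ matches the compact form stated.

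The essential structural observation is that $\alpha_n$ is geometric in~$n$: setting $q := \alpha_1 X/[(\tfrac{v_1}{T}+1)(1-X)]$ gives $\alpha_n = \alpha_1 q^{n-1}$. Since $X$ vanishes at $z=0$, this turns $\rho_j = \sum_{n\ge 1}\alpha_n X^{jn}$ into a bona fide geometric series in the formal power-series ring:
\[
\rho_j \;=\; \alpha_1 X^j \sum_{n\ge 0}(qX^j)^n \;=\; \frac{\alpha_1 X^j}{1 - qX^j}.
\]
Choosing $\alpha_1$ proportional to $\lambda\bigl(\tfrac{v_1}{T}+1\bigr)(1-X)$ forces $qX^j$ to collapse to $\lambda X^{j+1}$, while simultaneously endowing $\alpha_1 X^j$ with the precise prefactor $(\tfrac{v_1}{T}+1)(1-X)\lambda X^j$ appearing in the theorem. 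Substituting into $T_j = T(1-\rho_j)$ then gives the claimed formula.

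For completeness, that the guessed $\alpha_n$ really satisfies its recurrence is verified by a direct induction on the reduced form $\beta_{n+1}(1-X^n) = \sum_{i=1}^{n}\beta_i\beta_{n+1-i}X^{n+1-i}$: plugging in $\beta_n = \alpha_1^n X^{n-1}/(1-X)^{n-1}$ and collapsing the inner geometric sum $\sum_{i=1}^{n}X^{n-i} = (1-X^n)/(1-X)$ reduces both sides to $\alpha_1^{n+1}X^n(1-X^n)/(1-X)^n$. Hence the only genuinely non-mechanical step in the whole argument is the discovery of the closed form for $\alpha_n$ itself; I expect this hidden geometric structure to be the principal obstacle, since it is not forced by the recurrence in any obvious way and, as the paper notes, had to be inferred experimentally from low-order computations. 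Once it is in hand, the passage to the stated formula for $T_j$ is a single geometric summation followed by a cosmetic normalization of $\alpha_1$.
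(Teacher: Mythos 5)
Your proposal is correct and follows essentially the same route as the paper: take the experimentally guessed closed form for $\alpha_n$, verify it against the reduced $\beta$-recurrence, sum the resulting geometric series for $\rho_j$, and normalize $\alpha_1$ (your choice $\alpha_1=\lambda\bigl(\tfrac{v_1}{T}+1\bigr)(1-X)$ is the one consistent with the stated formula; the paper's line $\alpha_1=\lambda(1-X)/(\tfrac{v_1}{T}+1)$ is evidently a slip, harmless since $\lambda$ is a free parameter). The explicit check of the recurrence that you supply is precisely the step the paper leaves as ``readily checked.''
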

Setting $v_1=v_2=0$ and adapting to the initial condition $T_0=1$ gives the following result.
\begin{coroll}[\cite{Knuth,Boutalk}]
The generating function of plane trees of height $\le j$ is given by
\begin{equation*}
T_j=T\cdot \frac{1-X^{j+1}}{1-X^{j+2}},\quad\text{with}\quad X=zT/(1-zT)=T-1.
\end{equation*}
\end{coroll}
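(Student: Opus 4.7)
The plan is to derive the corollary directly from Theorem~\ref{DEEBNtheknuth} by specialising $v_1=v_2=0$ and then fitting the free parameter $\lambda$ to the boundary datum $T_0=1$.

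First I would substitute $v_1=v_2=0$ into the defining relations. The limit series $T$ then satisfies $T=1+zT^2$ (the Catalan functional equation), which rearranges to $zT=(T-1)/T$; feeding this into $X=z(v_1+T)/(1-z(v_2+T))=zT/(1-zT)$ gives
\begin{equation*}
X=\frac{(T-1)/T}{1-(T-1)/T}=\frac{(T-1)/T}{1/T}=T-1,
\end{equation*}
which is the displayed identity for $X$. The formula in Theorem~\ref{DEEBNtheknuth} simplifies to
\begin{equation*}
T_j=T\cdot\Bigl(1-\frac{(1-X)\lambda X^j}{1-\lambda X^{j+1}}\Bigr).
\end{equation*}

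Next I would determine $\lambda$ from $T_0=1$. Writing the equation $T\bigl(1-(1-X)\lambda/(1-\lambda X)\bigr)=1$ and clearing denominators yields $X(1-\lambda X)=(T-TX+X^2)\lambda$. Using $X=T-1$ one computes $T-TX+X^2=T-T(T-1)+(T-1)^2=1$, so the relation collapses to the linear equation $\lambda=X$. (One could worry about a spurious second root, but the equation is linear once $X=T-1$ is used, so there is no ambiguity; in any case the candidate $\lambda=X$ is manifestly a formal power series in $z$, which identifies the combinatorially correct solution.)

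Finally I would substitute $\lambda=X$ back and simplify:
\begin{equation*}
T_j=T\cdot\Bigl(1-\frac{(1-X)X^{j+1}}{1-X^{j+2}}\Bigr)
=T\cdot\frac{(1-X^{j+2})-X^{j+1}(1-X)}{1-X^{j+2}}
=T\cdot\frac{1-X^{j+1}}{1-X^{j+2}},
\end{equation*}
which is the claimed closed form. Neither step is a genuine obstacle; the only mild care needed is verifying the compatibility of $\lambda=X$ with the initial condition, which follows at once from $1+X=T$ because then $T\bigl(1-X/(1+X)\bigr)=T/(1+X)=1$.
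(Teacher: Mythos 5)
Your proposal is correct and takes essentially the same route as the paper, which simply specializes Theorem~\ref{DEEBNtheknuth} to $v_1=v_2=0$ (so $T=1+zT^2$, $X=zT/(1-zT)=T-1$) and adapts $\lambda$ to $T_0=1$, obtaining $\lambda=X$. Only a small typo: clearing denominators gives $X(1-\lambda X)=(T-TX)\lambda$, equivalently $X=(T-TX+X^2)\lambda$, not $X(1-\lambda X)=(T-TX+X^2)\lambda$ as displayed; your closing check $T\bigl(1-X/(1+X)\bigr)=T/(1+X)=1$ confirms $\lambda=X$ in any case.
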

Note that the free parameter $\lambda$ in Theorem~\ref{DEEBNtheknuth} allows to obtain refined enumerations, as shown in~\cite{Boutalk}.

\subsection{A family of ternary trees}
We have seen that one can eliminate the variables $v_1$ and $v_2$ from the recurrence relation~\eqref{DEEBNeqn3} (and also in~\eqref{DEEBNknuth1}) for $\alpha_n$ by a proper substitution, leading to the simplified recurrence relation~\eqref{DEEBNeqn4} for the values $\beta_n$. This is not the case anymore even for ternary trees. Consider for example the family $\mathcal{T}$ of weighted ternary trees, defined according to a functional equation for its counting series $T=T(z,v_1,v_2)=\sum_{T\in\mathcal{T}}z^{|T|}$,
\begin{equation*}
T(z)=1+z(2v_1+v_2)T(z)+zT(z)^3,
\end{equation*}
embedded according to recurrence relation
\begin{equation*}
T_j(z)=1+z\big(v_1T_{j-1}+ v_1T_{j+1}(z) +v_2T_j(z) \big) + zT_{j-1}(z)T_j(z)T_{j+1}(z),
\end{equation*}
with $j\in\Z$.
Proceeding as before, i.e.~making the ansatz $T_j=T(1-\rho_j)$ and subsequent refinements
$\rho_j=\sum_{n\ge 1}\alpha_n(X^j)^n$ with $X$ being the solution of
\begin{equation*}
1=z\Big( v_1\big(\frac1X+X\big)+v_2\Big) +zT^2\Big(\frac{1}{X}+1+X\Big),
\end{equation*}
with $|X|<1$, one obtains the recurrence relation
\begin{equation*}
\begin{split}
\alpha_{n+1}\Big(\frac{v_1}{T^2}+1\Big)\Big(\frac{1}{X^{n+1}}+X^{n+1}-\frac{1}{X}-X\Big)
&=\sum_{i=1}^{n}\alpha_i\alpha_{n+1-i}\Big(X^{n+1-2i} +\frac{1}{X^{i}}+X^{i}\Big)\\
&\quad+\sum_{i_1+i_2+i_3=n}\alpha_{i_1}\alpha_{i_2}\alpha_{i_3}X^{i_1-i_3}.
\end{split}
\end{equation*}
Unfortunately, we are not able to solve this recurrence relation for $v_1\neq 0$. We observe that as before the variable $v_2$ only appears in the defining equations for series $T$ and $X$, but not in the recurrence relation for $\alpha_n$. In the case $v_1=0$ we can use the solution of~\cite{Wo}, and subsequently may obtain a refinement of a result of~\cite{Wo}.

\section{Embedded (2d+1)-ary trees with small labels}
The starting point of our considerations is the system of recurrences~\eqref{DEEBNrec1a}.
We make the ansatz $T_j(z)=T(1-\rho_j)$, with $\rho_j=\rho_j(z)\to 0$ as $j$ tends to infinity, for $z$ near zero.
We expend Equation~\ref{DEEBNrec1a} with respect to the ansatz and obtain
\begin{equation*}
T(1-\rho_j)=1+zT^{2d+1}\prod_{\ell=-d}^{d}(1-\rho_{j+\ell}).
\end{equation*}
By definition of $(2d+1)$-ary trees~\eqref{DEEBteReqn0yi} we have $1=T-zT^{2d+1}$. Consequently,
\begin{equation*}
T(1-\rho_j)=T\Big(1-zT^{2d}+zT^{2d}\prod_{\ell=-d}^{d}(1-\rho_{j+\ell})\Big).
\end{equation*}
The equation above is equivalent to
\begin{equation*}
1-\rho_j=1-zT^{2d}+zT^{2d}\prod_{\ell=-d}^{d}(1-\rho_{j+\ell}).
\end{equation*}
By expansion of the product on the right hand side of the equation above we obtain the \emph{main equation}
\begin{equation}
\label{DEEBNmaster}
\rho_j=zT^{2d}-zT^{2d}\prod_{\ell=-d}^{d}(1-\rho_{j+\ell})=zT^{2d}\sum_{\ell=1}^{2d+1}(-1)^{\ell-1}\sum_{\mathbf{b}_{\ell}\subseteq \{-d,\dots,d\}}\bigg(\prod_{k=1}^{\ell}\rho_{j+b_k}\bigg),
\end{equation}
with $\mathbf{b}_{\ell}=\{b_1,\dots,b_{\ell}\}$ running over all subset of $\{-d,\dots,d\}$ of size $\ell$, $1\le \ell\le 2d+1$.
Comparing the terms tending at a similar rate to zero as $j$ tends to infinity we obtain the linear recurrence relation
\begin{equation*}
\rho_j=zT^{2d}\sum_{\ell=-d}^{d}\rho_{j+\ell}.
\end{equation*}
An ansatz $\rho_j=\alpha X^j$, assuming that there exists a formal power series $X=X(z)$ with $|X|<1$ for $z$ near 0, leads to the so-called \emph{characteristic equation}
\begin{equation}
\label{DEEBNchar2d1}
1=zT^{2d}\sum_{\ell=-d}^{d}X^{\ell},\quad\text{or equivalently}\quad 1=Z\sum_{\ell=-d}^{d}X^{\ell},\quad \text{with}\,\,
Z:=zT^{2d}.
\end{equation}
The equation above is identical to the characteristic equation of lattice path
with step set $\mathcal{S}=\{(1,\ell) \mid -d\le \ell \le d\}$, see Banderier and Flajolet~\cite{BandFla2002}.
We can use the very general considerations of~\cite{BandFla2002} summarized below in Lemma~\ref{DEEBNlemFla1} concerning such equations.
\begin{lemma}[Banderier and Flajolet~\cite{BandFla2002}]
\label{DEEBNlemFla1}
Let $\mathcal{S}\subseteq \Z$ denote a non-empty finite subset of the integers $\mathcal{S}=\{b_1,\dots,b_{m}\}$
and $\Pi:=\{w_1,\dots,w_{m}\}\subseteq \R^{+}$ the set of associated weights.
The characteristic polynomial $P(X)$ associated to $\mathcal{S}$ and $\Pi$ is a Laurent polynomial in $X$
given by $P(X)=\sum_{\ell=1}^{m}w_{\ell}X^{b_{\ell}}$. Let $c=-\min\{b_\ell\}$ and $d=\max\{b_\ell\}$.
The characteristic equation associated to $\mathcal{S}$ is given by
\begin{equation*}
1-ZP(X)=0,\quad\text{or equivalently}\quad X^c - Z X^cP(X)=0.
\end{equation*}
For $Z$ near zero the characteristic equation has $c+d$ solutions, of which $c$ solutions $X_1(Z),\dots X_c(Z)$ are small solution
with $|X_{\ell}(Z)|<1$ for $Z$ near zero. These $c$ so-called small branches are conjugate of each other at $Z=0$: there exist two functions $A$ and $B$ analytic at $Z=0$ and nonzero there such that in a neighbourhood of zero one has
\begin{equation*}
X_{\ell}=X_\ell(Z)=\omega^{\ell-1}Z^{1/c}A\big(\omega^{\ell-1}Z^{1/c}\big),
\quad\text{with}\quad  \omega=e^{2i\pi/c},
\end{equation*}
where $i$ denote the imaginary unit $i^2=-1$.
\end{lemma}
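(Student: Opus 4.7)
The plan is to recast the characteristic equation as a polynomial equation of controlled degree, locate the $c$ small branches via a Puiseux-type scaling together with the implicit function theorem, and extract all of them from a single analytic function through a cyclic symmetry of the rescaled equation.

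First I would clear the negative powers of $X$ by introducing the honest polynomial
\begin{equation*}
Q(X):=X^{c}P(X)=\sum_{\ell=1}^{m}w_{\ell}X^{b_{\ell}+c},
\end{equation*}
which has degree $c+d$ and nonzero constant term $Q(0)>0$ (the weight of the step $-c$, which exists by the very definition of $c$). The characteristic equation then takes the polynomial form $X^{c}=Z\,Q(X)$ of degree $c+d$ in $X$, accounting for all $c+d$ roots counted with multiplicity.

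Next I would perform the scaling $X=Z^{1/c}Y$, which transforms $X^{c}=ZQ(X)$ into
\begin{equation*}
Y^{c}=Q(uY),\qquad u:=Z^{1/c}.
\end{equation*}
At $u=0$ this reduces to $Y^{c}=Q(0)$, whose $c$ roots $Y=\omega^{\ell-1}Q(0)^{1/c}$, $\ell=1,\dots,c$, are all simple; moreover $\partial_{Y}(Y^{c}-Q(uY))$ evaluated at $u=0$, $Y=Q(0)^{1/c}$ equals $cQ(0)^{(c-1)/c}\neq 0$. The implicit function theorem therefore furnishes a unique analytic branch $Y_{1}(u)$ near $u=0$ with $Y_{1}(0)=Q(0)^{1/c}$ satisfying $Y_{1}(u)^{c}=Q(uY_{1}(u))$.

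Third, I would extract the remaining small branches via a cyclic symmetry of the rescaled equation: a direct computation shows that if $Y_{1}(u)^{c}=Q(uY_{1}(u))$, then $\tilde{Y}(u):=\omega Y_{1}(\omega u)$ satisfies the same identity, since $(\omega Y_{1}(\omega u))^{c}=Y_{1}(\omega u)^{c}=Q(\omega u\cdot Y_{1}(\omega u))=Q(u\cdot\omega Y_{1}(\omega u))$. Iterating yields $c$ branches $Y_{\ell}(u):=\omega^{\ell-1}Y_{1}(\omega^{\ell-1}u)$ taking the pairwise distinct values $\omega^{\ell-1}Q(0)^{1/c}$ at $u=0$. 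Setting $A(v):=Y_{1}(v)$ and unwinding, the corresponding $X$-solutions are
\begin{equation*}
X_{\ell}(Z)=Z^{1/c}Y_{\ell}(Z^{1/c})=\omega^{\ell-1}Z^{1/c}A\bigl(\omega^{\ell-1}Z^{1/c}\bigr),
\end{equation*}
which is exactly the claimed form; since $X_{\ell}(Z)=O(|Z|^{1/c})$, each such branch is indeed small near $Z=0$. A parallel analysis at infinity via the substitution $X=Z^{-1/d}\widetilde{Y}$ produces the remaining $d$ large branches and the analogous function $B$, exhausting the $c+d$ roots of $X^{c}-ZQ(X)$.

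The main expected obstacle is the symmetry bookkeeping rather than any deep analytic step: one must verify that the cyclic substitution truly produces $c$ \emph{distinct} branches and does not collapse onto a smaller set, and one must check that the branches obtained by applying the implicit function theorem separately at each of the $c$ seed values coincide with the single function $Y_{1}$ shifted by powers of $\omega$. Both points reduce to the observation that the seed values $\omega^{\ell-1}Q(0)^{1/c}$ are pairwise distinct, so the uniqueness clause of the implicit function theorem forces the $Y_{\ell}$ to be genuinely different branches and, combined with the degree count $c+d$, identifies them as exactly the small solutions.
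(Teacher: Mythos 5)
Your argument is correct: clearing denominators to $X^{c}=ZQ(X)$, rescaling $X=Z^{1/c}Y$, applying the implicit function theorem at the simple seed $Y=Q(0)^{1/c}$, and generating the remaining small branches by the substitution $u\mapsto\omega u$, $Y\mapsto\omega Y$ is exactly the classical Puiseux/Newton-polygon proof of this statement. Note that the paper itself offers no proof — it imports the lemma verbatim from Banderier and Flajolet — so your write-up simply reconstructs the argument of the cited source, including the counting of the $d$ large branches at infinity that identifies the $c$ conjugate branches as precisely the small solutions; no gaps of substance remain beyond the branch-of-$Z^{1/c}$ bookkeeping you already acknowledge.
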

The result of Banderier and Flajolet~\cite{BandFla2002} was initially derived in the context of the enumeration of (weighted) lattice paths.
We apply Lemma~\ref{DEEBNlemFla1} to case $\mathcal{S}=\{\ell \mid -d\le \ell \le d\}$, $w_{\ell}=1$, $-d\le \ell \le d$ and $Z=zT^{2d}$.
Consequently the equation $1=Z\sum_{\ell=-d}^{d}X^{\ell}$, with $Z=zT^{2d}$, has $d$ small solution $X_1(z),\dots X_d(z)$.
We refine the previous ansatz $\rho_j=\alpha X^j$ in terms of the $d$ solutions $X_1(z),\dots X_d(z)$ of the characteristic equation in the following way.
\begin{equation}
\label{DEEBNrefined}
\rho_j=\sum_{n_1,\dots,n_d\ge 0}\alpha_{n_1,\dots,n_d}X_1^{jn_1}\dots X_d^{jn_d}
=\sum_{\mathbf{n}\ge \boldsymbol{0}}\alpha_{\mathbf{n}}\mathbf{X}^{j\mathbf{n}},
\end{equation}
with $\alpha_{0,0,\dots,0}=0$ and unspecified initial values $\alpha_{\mathbf{e}_{\ell}}$, $1\le \ell\le d$, where $\mathbf{e}_{\ell}$ denotes the $\ell$-th unit vector.
By the refined ansatz the main equation~\eqref{DEEBNmaster} reads the following way.
\begin{equation*}
\sum_{\mathbf{n}\ge \boldsymbol{0}}\alpha_{\mathbf{n}}\mathbf{X}^{j\mathbf{n}}=zT^{2d}\sum_{\ell=1}^{2d+1}(-1)^{\ell-1}\sum_{\mathbf{b}_{\ell}\subseteq \{-d,\dots,d\}}\prod_{k=1}^{\ell}
\bigg(\sum_{\mathbf{n}\ge \boldsymbol{0}}\alpha_{\mathbf{n}}\mathbf{X}^{(j+b_k)\mathbf{n}}\bigg).
\end{equation*}
Our goal is to determine the unknown coefficients $\alpha_{\mathbf{n}}=\alpha_{\mathbf{n}}(\mathbf{X})$ as functions of $X_1,\dots,X_d$ and the unspecified initial values $\alpha_{\mathbf{e}_{\ell}}$, $1\le \ell \le d$.
In order to do so we compare the terms with the same order of magnitude in~\eqref{DEEBNeqn1} as $j$ tends infinity; this corresponds
to some kind of coefficient extraction with respect to $[\mathbf{X}^{j(\mathbf{n})}]$.
We obtain the for $\mathbf{n}=(n_1,\dots,n_d)\in\N_0^{d}\setminus\{\boldsymbol{0},\mathbf{e}_1,\dots,\mathbf{e}_d\}$ the recurrence relation
\begin{equation}
\label{DEEBNallg1}
\alpha_{\mathbf{n}}\Big(-\frac{1}{zT^{2d}}+\sum_{\ell=-d}^{d}\mathbf{X}^{\ell\mathbf{n}}\Big)
=\sum_{\ell=2}^{2d+1}(-1)^{\ell}\sum_{\sum_{k=1}^{\ell}\mathbf{g}_k=\mathbf{n}}\bigg(\prod_{k=1}^{\ell}\alpha_{\mathbf{g}_k}\bigg)
\sum_{\mathbf{b}_{\ell}\subseteq \{-d,\dots,d\}}\bigg(\prod_{k=1}^{\ell}\mathbf{X}^{b_k\mathbf{g}_k}\bigg);
\end{equation}
here $\mathbf{g}_k\in\N_0^{d}\setminus\{\boldsymbol{0}\}$ denotes a vector of length $d$, for $1\le k\le \ell $ and $2\le \ell\le 2d+1$, $\mathbf{b}_{\ell}=\{b_1,\dots,b_{\ell}\}$ runs over all subset of $\{-d,\dots,d\}$ of size $\ell$, $2\le \ell\le 2d+1$. Let $f(\mathbf{w})$ denote the formal power series $f(\mathbf{w})=\sum_{\mathbf{n}\ge \boldsymbol{0}}\alpha_{\mathbf{n}}\mathbf{w}^{\mathbf{n}}$. The recurrence relation above for $\alpha_{\mathbf{n}}=\alpha_{\mathbf{n}}(\mathbf{X})$ is equivalent to a functional equation
for $f(\mathbf{w})$
\begin{equation}
\label{DEEBNallg2}
\frac{-1}{zT^{2d}}f(\mathbf{w})+\sum_{\ell=-d}^{d}f(\mathbf{X}^{\ell}\mathbf{w})=\sum_{\ell=2}^{2d+1}(-1)^{\ell}
\sum_{\mathbf{b}_{\ell}\subseteq \{-d,\dots,d\}} \bigg(\prod_{k=1}^{\ell}f(\mathbf{X}^{b_k}\mathbf{w})\bigg);
\end{equation}
here we use the notation $\mathbf{X^{\ell}w}:=(X_1^{\ell}w_1,X_2^{\ell}w_2,\dots,X_d^{\ell}w_d)$. A \emph{crucial step} towards solving recurrence relation~\eqref{DEEBNallg1} is to study the recurrence relation $\alpha_{\mathbf{n}}$ with $\mathbf{n}=n_k\mathbf{e}_k=(0,\dots,0,n_k,0,\dots,0)$ for $1\le k\le d$.
Note that the simplified recurrence relation involves only terms $\alpha_{\mathbf{n}}$ such that $n_{\ell}=0$ for $\ell\neq k$.
Once these one parameter solutions are obtained, the general solution is immediately determined by equation~\eqref{DEEBNallg1}. Moreover,
by definition the formal power series $\rho_j(\mathbf{X})$ is a solution of the main equation~\eqref{DEEBNmaster}.

\subsection{One parameter solution\label{DEEBNsubseconepar}}
A solution with only one free parameter can be obtained by using a simpler ansatz using only one of the series $X_1(z),\dots,X_d(z)$.
Equivalently, we consider $\alpha_{\mathbf{n}}=\alpha_{\mathbf{n}}(\mathbf{X})$, with $\mathbf{n}=(0,\dots,0,n_k,0,\dots,0)$ for $1\le k\le d$.
We obtain the following simple solution.
\begin{lemma}
\label{DEEBNlem5}
Let $X=X(z)$ denote any of the $d$ small solutions $X_1(z),\dots,X_d(z)$ of the characteristic equation
$1=zT^{2d}\sum_{\ell=-d}^{d}X^{\ell}$. The recurrence relation~\eqref{DEEBNrec1a}
admits a solution with free parameter $\lambda$
\begin{equation*}
T_j=T\cdot\frac{(1-\lambda X^{d+1+j})(1-\lambda X^{2d+3+j})}{(1-\lambda X^{d+2+j})(1- \lambda X^{2d+2+j})},\quad j\in\Z.
\end{equation*}
\end{lemma}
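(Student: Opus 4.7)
The plan is to verify the closed form directly by substitution into the main equation~\eqref{DEEBNmaster}. Writing $1-\rho_j = T_j/T$ as prescribed by the ansatz, the first step is to evaluate the product $\prod_{\ell=-d}^{d}(1-\rho_{j+\ell})$. Shifting $j$ by $\ell$ merely shifts every exponent in the four factors $(1-\lambda X^{j+\bullet})$ by $\ell$, and since $\ell$ sweeps through a full block of length $2d+1$, the four groups of $2d+1$ factors split into two pairs in which almost all interior terms telescope. What survives is
\[
\prod_{\ell=-d}^{d}(1-\rho_{j+\ell}) \;=\; \frac{(1-\lambda X^{j+1})(1-\lambda X^{j+3d+3})}{(1-\lambda X^{j+d+2})(1-\lambda X^{j+2d+2})}.
\]

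The second step is to substitute this into the main equation in its rearranged form $1-\rho_j = 1 - Z + Z\prod_{\ell}(1-\rho_{j+\ell})$, with $Z := zT^{2d}$, and clear the common denominator $(1-\lambda X^{j+d+2})(1-\lambda X^{j+2d+2})$. Setting $u := \lambda X^j$, both sides become polynomials of degree two in $u$. The constant terms agree trivially, and the coefficients of $u^2$ both equal $X^{3d+4}$, so the entire identity reduces to matching the coefficient of $u$. After collecting terms and factoring out the common $(1-X^{d+1})$, the remaining identity is $X^d(1-X) = Z(1-X^{2d+1})$, which upon dividing by $X^d(1-X)$ becomes $1 = Z\sum_{\ell=-d}^{d}X^{\ell}$. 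This is precisely the characteristic equation~\eqref{DEEBNchar2d1}, and since $X$ is by hypothesis one of its small solutions, the identity holds automatically.

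Once this has been checked, it follows that the proposed $T_j$ solves~\eqref{DEEBNrec1a} for every choice of the free parameter $\lambda$ and every small branch $X\in\{X_1,\ldots,X_d\}$. Expanding geometrically one further verifies that $\rho_j$ is a formal power series in $\lambda X^j$ with $\rho_j \to 0$ as $j\to\infty$, so the construction is a genuine specialization of the refined ansatz~\eqref{DEEBNrefined} along the one-dimensional sublattice $\mathbf{n} = n\mathbf{e}_k$, with $\alpha_{n\mathbf{e}_k}$ readable off the geometric expansion. The hard part is purely the notational bookkeeping in the telescoping step; the residual algebra is elementary and collapses to the characteristic equation, which is the structural reason why this particular cross-ratio of four $(1-\lambda X^{\bullet})$ factors is forced upon us.
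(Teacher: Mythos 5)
Your verification is correct: the cross-ratio form of $1-\rho_j$ makes the product $\prod_{\ell=-d}^{d}(1-\rho_{j+\ell})$ telescope to $\frac{(1-\lambda X^{j+1})(1-\lambda X^{j+3d+3})}{(1-\lambda X^{j+d+2})(1-\lambda X^{j+2d+2})}$, and after clearing denominators the identity is quadratic in $u=\lambda X^{j}$ with the constant and $u^2$ coefficients matching identically, so everything collapses to $X^{d}(1-X)=Z(1-X^{2d+1})$, i.e.\ to the characteristic equation~\eqref{DEEBNchar2d1}. This is, however, a genuinely different route from the paper's. The paper works inside the refined ansatz: it restricts the recurrence~\eqref{DEEBNallg1} to one small branch, guesses (by computer experiments) the closed form~\eqref{DEEBNoneparsol} for the coefficients $\alpha_n$, checks the associated functional equation only for small $d$, and then merely asserts that the resulting closed form for $T_j$ "readily checks" against~\eqref{DEEBNrec1a}. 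Your argument carries out exactly that final check explicitly and uniformly in $d$, so it bypasses the guessing step entirely and is, as a proof of the lemma as stated, more self-contained and rigorous for all $d$. What the paper's route buys in exchange is the explicit coefficient formula~\eqref{DEEBNoneparsol}, which is reused verbatim in Proposition~\ref{DEEBNprop1}; your closing remark that the $\alpha_{n\mathbf{e}_k}$ can be read off the geometric expansion of your cross-ratio (with $\alpha_1=\lambda X^{d+1}(1-X)(1-X^{d+1})$, consistent with the paper's normalization) is the right way to recover that ingredient, though you would still need to expand and identify the coefficients if you wanted the formula itself.
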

\begin{proof}
Since by definition the series $X$ satisfies $1/(zT^{2d})=\sum_{\ell=-d}^{d}X^{\ell}$, the recurrence relation~\eqref{DEEBNallg1} simplifies to
\begin{equation*}
\alpha_{n+1}\Big(-\sum_{\ell=-d}^{d}X^{\ell}+\sum_{\ell=-d}^{d}X^{\ell(n+1)}\Big)
=\sum_{\ell=2}^{2d+1}(-1)^{\ell}\sum_{\sum_{k=1}^{\ell}g_k=n+1}\bigg(\prod_{k=1}^{\ell}\alpha_{g_k}\bigg)
\sum_{\mathbf{b}_{\ell}\subseteq \{-d,\dots,d\}}\bigg(\prod_{k=1}^{\ell}X^{b_kg_k}\bigg),\quad n\ge 1.
\end{equation*}
By experiments with \texttt{Maple} we obtain a solution with free parameter $\alpha_1$,
\begin{equation}
\label{DEEBNoneparsol}
\alpha_n= \frac{\alpha_1^n X^{n-1}(1-X^{nd})}{(1-X^d)(1-X)^{n-1}(1-X^{d+1})^{n-1}},\quad n\ge 1.
\end{equation}
One can checks for small $d=1,2,3,\dots$ that the arising function $f(w)=\sum_{n\ge1}\alpha_{n}w^n$ satisfies the functional equation
\begin{equation*}
-f(w)\sum_{\ell=-d}^{d}X^{\ell}+\sum_{\ell=-d}^{d}f(X^{\ell}w)=\sum_{\ell=2}^{2d+1}(-1)^{\ell}
\sum_{\mathbf{b}_{\ell}\subseteq \{-d,\dots,d\}} \bigg(\prod_{k=1}^{\ell}f(X^{b_k}w)\bigg).
\end{equation*}
Now we set $\alpha_1=\lambda X^{d+1}(1-X)(1-X^{d+1})$ in order to simplify the calculations. We get $T_j=T(1-\sum_{n\ge 1}\alpha_n X^{jn}$;
one readily checks that the stated solution satisfies recurrence relation~\eqref{DEEBNrec1a}.
\end{proof}

\subsection{The general solution}
An immediate application of Lemma~\ref{DEEBNlem5} and the explicit result~\eqref{DEEBNoneparsol} for $\alpha_n$ is the following result.
\begin{prop}
\label{DEEBNprop1}
Let $\rho_j=\sum_{\mathbf{n}\ge \boldsymbol{0}}\alpha_{\mathbf{n}}\mathbf{X}^{j\mathbf{n}}$, with $\rho_j=\rho_j(z)=\rho_j(\mathbf{X})$, and coefficients
$\alpha_{\mathbf{n}}=\alpha_{\mathbf{n}}(\mathbf{X})$ given by
\begin{equation*}
\alpha_{\mathbf{n}}=
\begin{cases}
0 &\text{for}\,\, \mathbf{n}=(0,0,\dots,0),\\
\displaystyle{\frac{\alpha_{\mathbf{e}_{\ell}}^{n_{\ell}} X_{\ell}^{n_{\ell}-1}(1-X_{\ell}^{n_{\ell}d})}{(1-X_{\ell}^d)(1-X_{\ell})^{n_{\ell}-1}(1-X_{\ell}^{d+1})^{n_{\ell}-1}}}
&\text{for}\,\, \mathbf{n}=n_{\ell} \mathbf{e}_{\ell},\quad\text{for}\,\, n_{\ell}\ge 1,\quad 1\le \ell \le d,\\[0.3cm]
\text{determined by recurrence relation~\eqref{DEEBNallg1}} &\text{for}\,\,\mathbf{n}\in\N_0^{d}\setminus\{\boldsymbol{0},\mathbf{e}_1,\dots,\mathbf{e}_d\},
\end{cases}
\end{equation*}
with unspecified initial values $\alpha_{\mathbf{e}_{\ell}}$, $1\le \ell\le d$. Then, the formal power series $T_j=T(1-\rho_j)$ satisfies the recurrence relation
\begin{equation*}
T_{j}(z)=1+z\prod_{\ell=-d}^{d}T_{j+\ell}(z).
\end{equation*}
Equivalently, the formal power series $\rho_j$ satisfies the equation
\begin{equation*}
\rho_j=zT^{2d}\sum_{\ell=1}^{2d+1}(-1)^{\ell-1}\sum_{\mathbf{b}_{\ell}\subseteq \{-d,\dots,d\}}\bigg(\prod_{k=1}^{\ell}\rho_{j+b_k}\bigg).
\end{equation*}
\end{prop}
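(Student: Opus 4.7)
The strategy is to substitute the refined ansatz~\eqref{DEEBNrefined} into the main equation~\eqref{DEEBNmaster}, match coefficients of $\mathbf{X}^{j\mathbf{n}}$ for each $\mathbf{n}\in\N_0^{d}$, and verify that the prescribed values $\alpha_{\mathbf{n}}$ make every such coefficient vanish. The equivalence between $\rho_j$ satisfying~\eqref{DEEBNmaster} and $T_j=T(1-\rho_j)$ satisfying the original recurrence $T_j=1+z\prod_{\ell=-d}^{d}T_{j+\ell}$ is built into the derivation preceding~\eqref{DEEBNmaster}, so only the first claim requires a proof.

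The coefficient extraction itself is direct. The $\ell=1$ contribution on the right-hand side of~\eqref{DEEBNmaster} equals $zT^{2d}\sum_{\mathbf{n}}\alpha_{\mathbf{n}}\mathbf{X}^{j\mathbf{n}}\sum_{b=-d}^{d}\mathbf{X}^{b\mathbf{n}}$, while for each $\ell\geq 2$ the term produces weighted sums of products $\alpha_{\mathbf{g}_1}\cdots\alpha_{\mathbf{g}_\ell}$ with $\mathbf{g}_1+\cdots+\mathbf{g}_\ell=\mathbf{n}$ and each $\mathbf{g}_k\neq\boldsymbol{0}$. Equating $[\mathbf{X}^{j\mathbf{n}}]$-coefficients on both sides and multiplying through by $-1/(zT^{2d})$ reproduces~\eqref{DEEBNallg1} verbatim. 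Since there are at least two nonzero summands $\mathbf{g}_k$ for $\ell\geq 2$, every $\mathbf{g}_k$ appearing on the right-hand side has strictly smaller weight than $\mathbf{n}$, so~\eqref{DEEBNallg1} determines $\alpha_{\mathbf{n}}$ by induction on $|\mathbf{n}|$ wherever the prefactor $-1/(zT^{2d})+\sum_{\ell=-d}^{d}\mathbf{X}^{\ell\mathbf{n}}$ is nonzero.

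The prefactor vanishes precisely when $\mathbf{X}^{\mathbf{n}}$ is itself a small root of the characteristic equation~\eqref{DEEBNchar2d1}. Using the Puiseux expansion $X_\ell=\omega^{\ell-1}Z^{1/d}A(\omega^{\ell-1}Z^{1/d})$ supplied by Lemma~\ref{DEEBNlemFla1} and comparing the leading $Z$-exponents together with the $d$-th roots of unity, one sees that $\mathbf{X}^{\mathbf{n}}\in\{X_1,\dots,X_d\}$ with $\mathbf{n}\in\N_0^{d}$ forces $\mathbf{n}=\mathbf{e}_k$; thus the $d$ multi-indices $\mathbf{e}_1,\dots,\mathbf{e}_d$ are exactly the resonant ones, and $\alpha_{\mathbf{e}_k}$ remains a free parameter. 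For $\mathbf{n}=n\mathbf{e}_\ell$ with $n\geq 2$ one still has to verify that the Lemma~\ref{DEEBNlem5} formula solves~\eqref{DEEBNallg1}: this is automatic because any decomposition $\sum_k\mathbf{g}_k=n\mathbf{e}_\ell$ with nonzero $\mathbf{g}_k$ forces every $\mathbf{g}_k$ to be supported on the $\ell$-th axis, so~\eqref{DEEBNallg1} restricted to pure $\ell$-axis indices collapses to the single-variable recurrence of Lemma~\ref{DEEBNlem5}, whose solution is~\eqref{DEEBNoneparsol}. Combining the three cases $\mathbf{n}=\boldsymbol{0}$ (trivial since $\alpha_{\boldsymbol{0}}=0$), $\mathbf{n}=n\mathbf{e}_\ell$, and $\mathbf{n}\notin\{\boldsymbol{0},\mathbf{e}_1,\dots,\mathbf{e}_d\}$, every Fourier-type coefficient of~\eqref{DEEBNmaster} vanishes, so $\rho_j$ is a formal solution and $T_j=T(1-\rho_j)$ satisfies the original recurrence.

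The subtlest step I expect is the resonance analysis in the preceding paragraph: rigorously showing that the vanishing locus of the prefactor in~\eqref{DEEBNallg1} inside $\N_0^{d}$ consists exactly of the unit vectors, so that no unintended additional constraints are imposed and the inductive construction really does determine all remaining $\alpha_{\mathbf{n}}$. Once this monomial-identification among the small Puiseux branches $X_1,\dots,X_d$ is justified, the rest of the argument is a routine induction on $|\mathbf{n}|$ together with the pure-axis consistency check supplied by Lemma~\ref{DEEBNlem5}.
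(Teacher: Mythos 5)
Your argument is correct and is essentially the paper's own proof: both read \eqref{DEEBNallg1} as the coefficient-matched form of the main equation \eqref{DEEBNmaster}, take the pure-axis values from Lemma~\ref{DEEBNlem5} and \eqref{DEEBNoneparsol}, and conclude that $\rho_j$ solves \eqref{DEEBNmaster} by construction, with $\alpha_{\mathbf{e}_\ell}$ left free. The only difference is cosmetic — you justify solvability off the unit vectors by comparing Puiseux exponents of the small branches to show the prefactor is non-resonant there, whereas the paper inverts the prefactor directly via a geometric-series expansion in small quantities — but this is the same underlying point.
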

\begin{proof}
By the recursive description~\eqref{DEEBNallg1} of $\mathbf{\alpha}_{\mathbf{n}}$ and the simple observation that for $z$ near zero we have
\begin{equation*}
\frac{1}{-\frac{1}{zT^{2d}}+\sum_{\ell=-d}^{d}\mathbf{X}^{\ell\mathbf{n}}}
=-zT^{2d}\mathbf{X}^{d\mathbf{n}}\sum_{k\ge 0} \Big(zT^{2d}\sum_{\ell=0}^{2d}\mathbf{X}^{\ell\mathbf{n}}\Big)^k,
\end{equation*}
the values $\mathbf{\alpha}_{\mathbf{n}}=\mathbf{\alpha}_{\mathbf{n}}$ can be written as formal power series in $\tilde{T}=\tilde{T}(z)=T-1$,
according to $zT^{2d}=1-T=-\tilde{T}$, with $\tilde{T}(0)=0$, and $\mathbf{X}$. Consequently, by definition of the values $\mathbf{\alpha}_{\mathbf{n}}$~\eqref{DEEBNallg1} the left and right hand side
\eqref{DEEBNmaster} coincide.
\end{proof}

The huge obstacle concerning our enumeration problem~\eqref{DEEBNrec1a} is to explicitly determine the values
$\alpha_{\mathbf{n}}$ in the general case in order to adapt the initial values $\lambda_{\ell}$, $1\le \ell\le d$, to the initial conditions
$T_{-\ell}=1$, $1\le \ell\le d$. The only way known to us to obtain $\alpha_{\mathbf{n}}$
is either guessing the solution after experiments, or to solve the functional equation~\eqref{DEEBNallg2}.
Unfortunately, we do not know how to directly solve~\eqref{DEEBNallg2} and we did not manage yet to guess a general formula for $\alpha_{\mathbf{n}}$.

\section{Embedded 2d-ary trees with small labels}
The considerations for embedded $2d$-ary trees are similar to $(2d+1)$-ary trees, therefore we will be more brief.
According to the ansatz $T_j=T(1-\rho_j)$ we expend Equation~\ref{DEEBNrec1a} and obtain the equation
\begin{equation}
\label{DEEBNmaster2d}
\rho_j=-zT^{2d-1}+zT^{2d-1}\prod_{\ell=1}^{d}(1-\rho_{j+2\ell-1})(1-\rho_{j-2\ell+1})=
zT^{2d-1}\sum_{\ell=1}^{2d}(-1)^{\ell}\sum_{\mathbf{b}_{\ell}\subseteq B_{2d}}\bigg(\prod_{k=1}^{\ell}\rho_{j+b_k}\bigg),
\end{equation}
with $\mathbf{b}_{\ell}=\{b_1,\dots,b_{\ell}\}$ running over all subset of $B_{2d}=\{-(2d-1),-(2d-3),\dots,2d-1\}$ of size $\ell$, $1\le \ell\le 2d$.
Comparing the terms tending at a similar rate to zero as $j$ tends to infinity we obtain the linear recurrence relation
\begin{equation*}
\rho_j=zT^{2d-1}\sum_{\ell=1}^{d}(\rho_{j+2\ell-1}+\rho_{j-2\ell+1}).
\end{equation*}
An ansatz $\rho_j=\alpha X^j$, assuming that there exists a formal power series $X=X(z)$ with $|X|<1$ for $z$ near 0, leads to the characteristic equation
\begin{equation}
\label{DEEBNchar2d}
1=zT^{2d-1}\sum_{\ell=1}^{d}\big(X^{2\ell-1}+X^{-2\ell+1}\big),\quad\text{or equivalently}\quad 1=Z\sum_{\ell=1}^{d}\big(X^{2\ell-1}+X^{-2\ell+1}\big),\quad \text{with}\,\,
Z:=zT^{2d-1}.
\end{equation}
We apply Lemma~\ref{DEEBNlemFla1} to case $\mathcal{S}=\{2\ell-1,-2\ell+1 \mid 1\le \ell \le d\}$, with weights all equal to one, and $Z=zT^{2d-1}$.
Consequently the equation $1=Z\sum_{\ell=1}^{d}\big(X^{2\ell-1}+X^{-2\ell+1}\big)$, with $Z=zT^{2d-1}$, has $2d-1$ small solution $X_1(z),\dots X_{2d-1}(z)$. As before, we refine the previous ansatz $\rho_j=\alpha X^j$ in terms of the $2d-1$ solutions $X_1(z),\dots X_{2d-1}(z)$ of the characteristic equation in the following way.
\begin{equation*}
\rho_j=\sum_{n_1,\dots,n_{2d-1}\ge 0}\alpha_{n_1,\dots,n_{2d-1}}X_1^{jn_1}\dots X_{2d-1}^{jn_{2d-1}}
=\sum_{\mathbf{n}\ge \boldsymbol{0}}\alpha_{\mathbf{n}}\mathbf{X}^{j\mathbf{n}},
\end{equation*}
with $\alpha_{0,0,\dots,0}=0$ and unspecified initial values $\alpha_{\mathbf{e}_{\ell}}$, $1\le \ell\le 2d-1$, where $\mathbf{e}_{\ell}$ denotes the $\ell$-th unit vector. According to the refined ansatz the equation~\eqref{DEEBNmaster2d} we obtain the for $\mathbf{n}=(n_1,\dots,n_{2d-1})\in\N_0^{2d-1}\setminus\{\boldsymbol{0},\mathbf{e}_1,\dots,\mathbf{e}_{2d-1}\}$ the recurrence relation
\begin{equation}
\label{DEEBNallg2d}
\alpha_{\mathbf{n}}\Big(-\frac{1}{zT^{2d-1}}+\sum_{\ell=1}^{d}\big(\mathbf{X}^{(2\ell-1)\mathbf{n}}+\mathbf{X}^{(-2\ell+1)\mathbf{n}}\big)\Big)
=\sum_{\ell=2}^{2d}(-1)^{\ell}\sum_{\sum_{k=1}^{\ell}\mathbf{g}_k=\mathbf{n}}\bigg(\prod_{k=1}^{\ell}\alpha_{\mathbf{g}_k}\bigg)
\sum_{\mathbf{b}_{\ell}\subseteq B_{2d}}\bigg(\prod_{k=1}^{\ell}\mathbf{X}^{b_k\mathbf{g}_k}\bigg);
\end{equation}
here $\mathbf{g}_k\in\N_0^{2d-1}\setminus\{\boldsymbol{0}\}$ denotes a vector of length $2d-1$, for $1\le k\le \ell $ and $2\le \ell\le 2d$, $\mathbf{b}_{\ell}=\{b_1,\dots,b_{\ell}\}$ runs over all subset of $B_{2d}=\{-(2d-1),-(2d-3),\dots,2d-1\}$ of size $\ell$, $1\le \ell\le 2d$.

\subsection{One parameter solution\label{DEEBNsubseconepar2d}}
The solutions with one free parameter can be obtained by using a simpler ansatz using only one of the series $X_1(z),\dots,X_{2d-1}(z)$.
Equivalently, we consider $\alpha_{\mathbf{n}}=\alpha_{\mathbf{n}}(\mathbf{X})$, with $\mathbf{n}=(0,\dots,0,n_k,0,\dots,0)$ for $1\le k\le 2d-1$.
We obtain the following simple solution.
\begin{lemma}
\label{DEEBNlem6}
Let $X=X(z)$ denote any of the $2d-1$ small solutions $X_1(z),\dots,X_{2d-1}(z)$ of the characteristic equation
$1=zT^{2d-1}\sum_{\ell=1}^{d}(X^{2\ell-1}+X^{-2\ell+1})$. The recurrence relation~\eqref{DEEBNrec1b}
admits a solution with free parameter $\lambda$
\begin{equation*}
T_j=T\cdot\frac{(1-\lambda X^{d+1+j})(1-\lambda X^{3d+4+j})}{(1-\lambda X^{d+3+j})(1- \lambda X^{3d+2+j})},\quad j\in\Z.
\end{equation*}
\end{lemma}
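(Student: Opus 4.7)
The argument will mirror the proof of Lemma~\ref{DEEBNlem5}, now applied to the $2d$-ary recurrence~\eqref{DEEBNrec1b}. First I would restrict the refined ansatz to multi-indices supported on a single coordinate axis, writing $\rho_j=\sum_{n\ge 1}\alpha_n X^{jn}$ where $X=X(z)$ is one of the $2d-1$ small solutions of the characteristic equation~\eqref{DEEBNchar2d}. Substituting into the master equation~\eqref{DEEBNmaster2d} and using~\eqref{DEEBNchar2d} to replace $1/(zT^{2d-1})$, the coefficient of $\alpha_{n+1}$ on the left of the resulting recurrence~\eqref{DEEBNallg2d} collapses to
$$\sum_{\ell=1}^{d}\bigl(X^{(2\ell-1)(n+1)}+X^{-(2\ell-1)(n+1)}-X^{2\ell-1}-X^{-(2\ell-1)}\bigr),$$
while the right-hand side becomes a finite convolution in $\alpha_1,\dots,\alpha_n$. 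This recursively determines each $\alpha_{n+1}$ and leaves $\alpha_1$ as the single free parameter.

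Guided by formula~\eqref{DEEBNoneparsol} in the $(2d+1)$-ary case and by \texttt{Maple} experiments for small $d$, I would conjecture the closed form
$$\alpha_n=\frac{\alpha_1^n\,X^{2(n-1)}\bigl(1-X^{n(2d-1)}\bigr)}{(1-X^{2d-1})(1-X^{2})^{n-1}(1-X^{2d+1})^{n-1}},\qquad n\ge 1,$$
and verify it by plugging into the simplified recurrence and checking the resulting Laurent polynomial identity in $X$ for several values of $d$. To produce the rational form for $T_j$, I would then set $\alpha_1=\lambda X^{d+1}(1-X^{2})(1-X^{2d+1})$ and sum the series: expanding each factor $(1-\lambda X^{d+3+j})^{-1}$ and $(1-\lambda X^{3d+2+j})^{-1}$ geometrically and using $\sum_{k=0}^{n-1}X^{k(2d-1)}=(1-X^{n(2d-1)})/(1-X^{2d-1})$ yields
$$\rho_j=\sum_{n\ge 1}\alpha_n X^{jn}=\frac{\lambda X^{d+1+j}(1-X^{2})(1-X^{2d+1})}{(1-\lambda X^{d+3+j})(1-\lambda X^{3d+2+j})}.$$
A one-line check, based on the identity $X^{d+1}+X^{3d+4}-X^{d+3}-X^{3d+2}=X^{d+1}(1-X^{2})(1-X^{2d+1})$, then shows that the numerator of $1-\rho_j$ factorises as $(1-\lambda X^{d+1+j})(1-\lambda X^{3d+4+j})$, giving the stated form of $T_j=T(1-\rho_j)$.

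The main obstacle, exactly as in Lemma~\ref{DEEBNlem5}, is the initial guessing of the closed form for $\alpha_n$: the associated functional equation for $f(w)=\sum_{n\ge 1}\alpha_n w^n$ does not seem amenable to any direct solution technique, so the ansatz must be produced experimentally via computer algebra. Once the formula for $\alpha_n$ is in hand, every remaining step — verification against the recurrence, summation of the geometric-like series, and the numerator factorisation — is purely mechanical.
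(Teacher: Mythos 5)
Your proposal follows essentially the same route as the paper: restrict the ansatz to a single small branch $X$, guess by computer algebra exactly the closed form $\alpha_n=\alpha_1^n X^{2(n-1)}(1-X^{n(2d-1)})/\bigl((1-X^{2d-1})(1-X^2)^{n-1}(1-X^{2d+1})^{n-1}\bigr)$, substitute $\alpha_1=\lambda X^{d+1}(1-X^2)(1-X^{2d+1})$, and sum $\rho_j=\sum_{n\ge1}\alpha_nX^{jn}$ to the stated rational form before the final check against~\eqref{DEEBNrec1b}. Your explicit geometric-series summation and the factorisation via $X^{d+1}+X^{3d+4}-X^{d+3}-X^{3d+2}=X^{d+1}(1-X^2)(1-X^{2d+1})$ correctly carry out the step the paper only asserts as ``one readily checks.''
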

\begin{proof}
By experiments with \texttt{Maple} we obtain a solution with free parameter $\alpha_1$ of the simplified recurrence relation~\eqref{DEEBNallg2d}
\begin{equation}
\label{DEEBNoneparsol2d}
\alpha_n= \frac{\alpha_1^n X^{2(n-1)}(1-X^{n(2d-1)})}{(1-X^{2d-1})(1-X^2)^{n-1}(1-X^{2d+1})^{n-1}},\quad n\ge 1.
\end{equation}
Now we set $\alpha_1=\lambda X^{d+1}(1-X^2)(1-X^{2d+1})$ in order to simplify the calculations; one readily checks that the stated solution satisfies recurrence relation~\eqref{DEEBNrec1b}.
\end{proof}

\subsection{The general solution}
An immediate application of Lemma~\ref{DEEBNlem6} and the explicit result~\eqref{DEEBNoneparsol2d} for $\alpha_n$ is the following result.
\begin{prop}
\label{DEEBNprop2}
Let $\rho_j=\sum_{\mathbf{n}\ge \boldsymbol{0}}\alpha_{\mathbf{n}}\mathbf{X}^{j\mathbf{n}}$, with $\rho_j=\rho_j(z)=\rho_j(\mathbf{X})$, and coefficients
$\alpha_{\mathbf{n}}=\alpha_{\mathbf{n}}(\mathbf{X})$ given by
\begin{equation*}
\alpha_{\mathbf{n}}=
\begin{cases}
0 &\text{for}\,\, \mathbf{n}=(0,0,\dots,0),\\
\displaystyle{\frac{\alpha_{\mathbf{e}_{\ell}}^n X_{\ell}^{2(n_{\ell}-1)}(1-X_{\ell}^{n_{\ell}(2d-1)})}{(1-X_{\ell}^{2d-1})(1-X_{\ell}^2)^{n_{\ell}-1}(1-X_{\ell}^{2d+1})^{n_{\ell}-1}}}
&\text{for}\,\, \mathbf{n}=n_{\ell} \mathbf{e}_{\ell},\quad\text{for}\,\, n_{\ell}\ge 1,\quad 1\le \ell \le 2d-1,\\[0.3cm]
\text{determined by recurrence relation~\eqref{DEEBNallg2d}} &\text{for}\,\,\mathbf{n}\in\N_0^{2d-1}\setminus\{\boldsymbol{0},\mathbf{e}_1,\dots,\mathbf{e}_{2d-1}\},
\end{cases}
\end{equation*}
with unspecified initial values $\alpha_{\mathbf{e}_{\ell}}$, $1\le \ell\le 2d-1$. Then, the formal power series $T_j=T(1-\rho_j)$ satisfies the recurrence relation
\begin{equation*}
T_{j}(z)=1+z\prod_{\ell=1}^{d}\big(T_{j+2\ell-1}(z)+T_{j-2\ell+1}(z)\big).
\end{equation*}
\end{prop}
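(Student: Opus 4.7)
The statement is the $2d$-ary analogue of Proposition~\ref{DEEBNprop1}, so the plan is to mirror its argument. Specifically, I would verify two things: first, that the recurrence~\eqref{DEEBNallg2d} unambiguously determines the coefficients $\alpha_{\mathbf{n}}$ for all $\mathbf{n}\in\N_0^{2d-1}\setminus\{\boldsymbol{0},\mathbf{e}_1,\ldots,\mathbf{e}_{2d-1}\}$ once the boundary data $\alpha_{\mathbf{e}_\ell}$ are prescribed; and second, that the resulting series $\rho_j=\sum_{\mathbf{n}\ge\boldsymbol{0}}\alpha_{\mathbf{n}}\mathbf{X}^{j\mathbf{n}}$ solves the main equation~\eqref{DEEBNmaster2d}, which under the ansatz $T_j=T(1-\rho_j)$ is equivalent to the claimed recurrence for $T_j$ via the defining relation $T=1+zT^{2d}$.

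For well-definedness I would show that the scalar coefficient
\[
C_{\mathbf{n}}:=-\frac{1}{zT^{2d-1}}+\sum_{\ell=1}^{d}\Bigl(\mathbf{X}^{(2\ell-1)\mathbf{n}}+\mathbf{X}^{-(2\ell-1)\mathbf{n}}\Bigr)
\]
multiplying $\alpha_{\mathbf{n}}$ on the left of~\eqref{DEEBNallg2d} is invertible whenever $\mathbf{n}\neq\mathbf{e}_\ell$. Factoring out $-1/(zT^{2d-1})$ gives the geometric expansion
\[
\frac{1}{C_{\mathbf{n}}}=-zT^{2d-1}\sum_{k\ge 0}\Bigl(zT^{2d-1}\sum_{\ell=1}^{d}\bigl(\mathbf{X}^{(2\ell-1)\mathbf{n}}+\mathbf{X}^{-(2\ell-1)\mathbf{n}}\bigr)\Bigr)^{k},
\]
which is a legitimate formal series in $\tilde{T}=T-1$ and $\mathbf{X}$, since $zT^{2d-1}$ has positive valuation. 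The excluded directions $\mathbf{n}=\mathbf{e}_\ell$ are exactly those where $C_{\mathbf{e}_\ell}=0$ by the characteristic equation~\eqref{DEEBNchar2d}; this is precisely why the $\alpha_{\mathbf{e}_\ell}$ must enter as free parameters, and they are supplied by the one-parameter solutions of Lemma~\ref{DEEBNlem6}. With $C_{\mathbf{n}}$ invertible, the right-hand side of~\eqref{DEEBNallg2d} involves only $\alpha_{\mathbf{g}}$ with $|\mathbf{g}|<|\mathbf{n}|$, so induction on $|\mathbf{n}|$ determines each $\alpha_{\mathbf{n}}$ uniquely as a formal series.

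For the second point I would substitute the refined ansatz into~\eqref{DEEBNmaster2d}, fully expand the product $\prod_{\ell=1}^{d}(1-\rho_{j+2\ell-1})(1-\rho_{j-2\ell+1})$, and extract the coefficient of $\mathbf{X}^{j\mathbf{n}}$ on both sides. For $|\mathbf{n}|\ge 2$ this extraction reproduces exactly~\eqref{DEEBNallg2d}, which holds by construction of the $\alpha_{\mathbf{n}}$; for $|\mathbf{n}|=1$ it collapses to the characteristic equation~\eqref{DEEBNchar2d}, satisfied by each small branch $X_\ell$. Consequently both sides of~\eqref{DEEBNmaster2d} coincide term by term, and unwinding the ansatz yields the stated recurrence for $T_j$.

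The principal obstacle is the bookkeeping in the coefficient extraction: one must verify that, when the full product is expanded, the combinatorial factors agree with the subset sums $\sum_{\mathbf{b}_\ell\subseteq B_{2d}}\prod_k \mathbf{X}^{b_k\mathbf{g}_k}$ appearing on the right of~\eqref{DEEBNallg2d}. The argument is structurally identical to the $(2d+1)$-ary case, but the asymmetric index set $B_{2d}=\{-(2d-1),-(2d-3),\ldots,2d-1\}$ of odd integers demands slightly more care than the symmetric set $\{-d,\ldots,d\}$ used in the proof of Proposition~\ref{DEEBNprop1}.
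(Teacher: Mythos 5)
Your proposal is correct and follows essentially the same route as the paper: the paper's (terse) argument for the $(2d+1)$-ary analogue, Proposition~\ref{DEEBNprop1}, is exactly the formal invertibility of the prefactor via the geometric-series expansion in $zT^{2d-1}$ together with the observation that coefficient extraction in the main equation reproduces the defining recurrence, and Proposition~\ref{DEEBNprop2} is proved the same way. Your write-up is in fact somewhat more explicit than the paper (e.g.\ noting that the excluded directions $\mathbf{n}=\mathbf{e}_\ell$ are precisely where the prefactor vanishes by the characteristic equation~\eqref{DEEBNchar2d}, and that the pure-power coefficients $\alpha_{n_\ell\mathbf{e}_\ell}$ rely on the verification behind Lemma~\ref{DEEBNlem6}), which only strengthens the argument.
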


\section{Enumeration of lattice paths and degenerated embedded trees\label{Flajo}}
Fix a set of step vectors $\mathcal{S}=\{(1,b_1),\dots, (1,b_m)\}$ with $b_{\ell}\in\Z$, $1\le \ell\le m$.
A simple lattice path, also called a walk, is a sequence $(v_1,\dots, v_n)$ such that for each $v_i\in\mathcal{S}$.
A meander is a simple lattice path restricted to $\N_0\times\N_0$. An excursion is a meander with starting point and end point on the $y$-axis.
It is often useful to consider weighted lattice paths $\Pi=\{w_1,\dots, w_m\}$, where weights $w_{\ell}$ is associated
to step $(1,b_{\ell})$. The weight of a path is defined as the product of the weight of the steps.

\begin{figure}[htb]
\centering
\includegraphics[angle=0,scale=0.7]{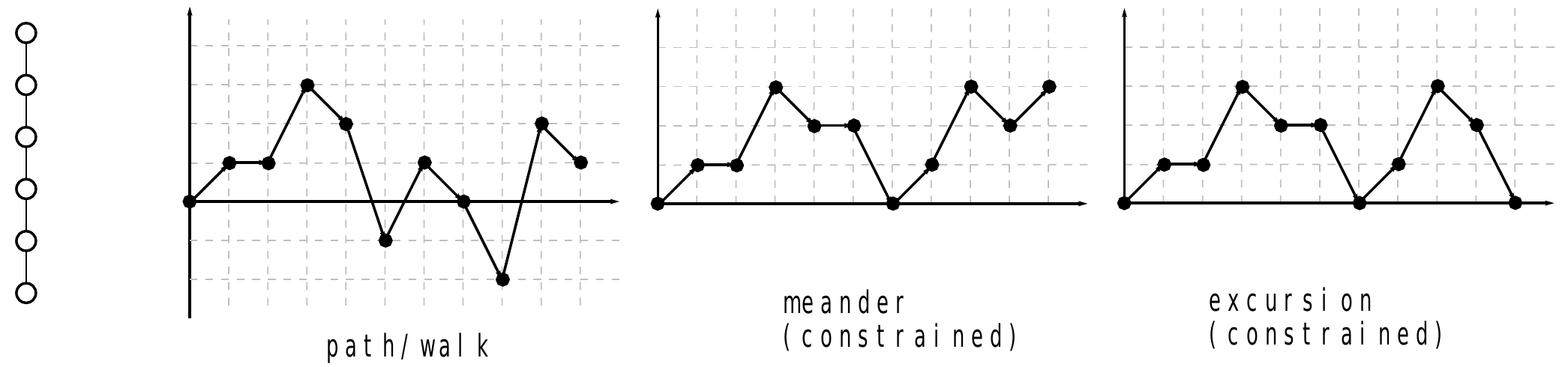}
\caption{Degenerated 1-ary tree defined by $T(z)=1+zT(z)$, and three types of lattice paths: unrestricted paths, meanders restricted to $\N_0\times\N_0$, and excursions starting and ending at level zero.}
\label{DEEBNfig2}
\end{figure}

Banderier and Flajolet~\cite{BandFla2002}, amongst many other things, derived the generating functions of meanders and excursions with respect to a set of step vectors $\mathcal{S}$ and weights $\Pi$ using the kernel method. In the following we will rederive (and slightly refine) these generating functions using the method of Section~\ref{Franc}. Following~\cite{BandFla2002} we introduce the characteristic polynomial
$P(X):=\sum_{\ell=1}^{m}w_{\ell}X^{b_{\ell}}$ of step set $\mathcal{S}=\{(1,b_1),\dots, (1,b_m)\}$, with $c=-\min\{b_\ell\}$ and $d=\max\{b_\ell\}$, and weights $\Pi=\{w_1,\dots, w_m\}$.
Let $T_j(z)$ denote the generating functions of all weighted meanders with steps $\mathcal{S}=\{(1,b_1),\dots, (1,b_m)\}$ and weights $\Pi=\{w_1,\dots, w_m\}$ starting at level $j$, with $j\ge 0$. We have the infinite system of recurrence relations
\begin{equation}
\label{DEEBNlattice1}
T_j(z)=1+z\sum_{\ell=1}^{m}w_{\ell}T_{j+b_{\ell}}(z),\quad j\ge 0,
\end{equation}
with initial conditions $T_{-1}(z)=\cdots=T_{-c}(z)=0$. This
recurrence relation can be interpreted as a recurrence relation for
embedded trees with respect to the class of degenerated unary trees defined by the equation $T(z)=1+zT(z)$.

 \smallskip

For $j\to\infty$ we have convergence in the sense of formal power series
$T_j(z)\to T(z)$, where $T(z)$ is the generating function of unconstrained lattice paths,
i.e.~starting at zero and ending anywhere, with steps $\mathcal{S}=\{(1,b_1),\dots, (1,b_m)\}$ and weights $\Pi=\{w_1,\dots, w_m\}$,
\begin{equation*}
T(z)=1+z\sum_{\ell=1}^{m}w_{\ell}T(z),\quad\text{or}\quad T(z)=\frac{1}{1-zP(1)},
\end{equation*}
where $P(X)=\sum_{\ell=1}^{m}w_{\ell}X^{b_{\ell}}$ denotes the characteristic polynomial of steps and weights. We use the ansatz $T_j(z)=T(z)(1-\rho_j(z))$ to obtain
\begin{equation*}
T(z)(1-\rho_j(z))=1+z\sum_{\ell=1}^{m}w_{\ell}T(z)\big(1-\rho_{j+b_{\ell}}(z)\big).
\end{equation*}
Consequently, we get a linear recurrence relation for $\rho_j(z)$,
\begin{equation*}
\rho_j(z)=z\sum_{\ell=1}^{m}w_{\ell}\rho_{j+b_{\ell}}(z).
\end{equation*}
Setting $\rho_j(z)=X^j$ we obtain after simple manipulations the characteristic equation
\begin{equation}
\label{HiFla}
1-z\sum_{\ell=1}^{m}w_{\ell}X^{b_{\ell}}=0,\quad\text{or equivalently},\quad 1-zP(X)=0.
\end{equation}
By Lemma~\ref{DEEBNlemFla1} there exist $c$ solution small solutions
$X_1(z),\dots,X_c(z)$ for $z$ in a neighborhood of zero, and the
general solution is given by
\begin{equation*}
\rho_j(z)=\sum_{\ell=1}^{c}\alpha_{\ell}X_{\ell}^j,
\end{equation*}
with unspecified $\alpha_{\ell}$, $1\le \ell\le c$.
Consequently, we obtain the following result. 
\begin{theorem}
A family of solutions of the system of recurrence relations~\eqref{DEEBNlattice1}, well defined for $z$ in a neighbourhood of zero, is given in terms of the $c$ small solutions $X_1,\dots,X_c$ of the characteristic equation~\eqref{HiFla},
\begin{equation*}
T_j(z)= T(z) (1-\sum_{\ell=1}^{c}\alpha_{\ell}X_{\ell}^j),
\end{equation*}
with free parameters $\alpha_{\ell}$, with $1\le \ell\le c$. The parameters $\alpha_{\ell}$
are independent of $j$, but may depend on $X_1,\dots,X_c$ and $z$.
\end{theorem}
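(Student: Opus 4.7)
The plan is a direct verification: substitute the ansatz $T_j(z)=T(z)(1-\rho_j(z))$ with $\rho_j(z)=\sum_{\ell=1}^{c}\alpha_{\ell}X_{\ell}^{j}$ into the recurrence~\eqref{DEEBNlattice1} and check that both sides agree, using only the defining equation for $T(z)$ and the characteristic equation~\eqref{HiFla} satisfied by each small branch $X_\ell$.

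First, I expand the right-hand side of~\eqref{DEEBNlattice1}:
\begin{equation*}
1+z\sum_{k=1}^{m}w_{k}T_{j+b_{k}}(z)
= 1+zP(1)T(z) - zT(z)\sum_{k=1}^{m}w_{k}\rho_{j+b_{k}}(z).
\end{equation*}
Since $T(z)=1/(1-zP(1))$ satisfies $T(z)=1+zP(1)T(z)$, the $\rho$-independent terms collapse to $T(z)$. Hence the recurrence is equivalent to the linear identity
\begin{equation*}
\rho_j(z) = z\sum_{k=1}^{m}w_{k}\rho_{j+b_{k}}(z).
\end{equation*}
By linearity it suffices to verify this for each summand $X_{\ell}^{j}$ separately, and there it reduces to $1 = z\sum_{k=1}^{m}w_{k}X_{\ell}^{b_{k}} = zP(X_{\ell})$, which is precisely~\eqref{HiFla}. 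Hence every linear combination of the small branches solves the linearized equation, and any choice of the $\alpha_\ell$ (independent of $j$) yields a solution of~\eqref{DEEBNlattice1}.

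Second, I would address the well-definedness claim. By Lemma~\ref{DEEBNlemFla1}, each small branch $X_\ell(z)$ vanishes as $z\to 0$, so for each fixed $j\ge 0$ the quantity $X_{\ell}^{j}$ is analytic at the origin (it has positive valuation in $z^{1/c}$). Thus $T_j(z)=T(z)(1-\sum_{\ell}\alpha_\ell X_\ell^j)$ is meaningful in a neighbourhood of $z=0$ for any $\alpha_\ell$ analytic there. No initial conditions are imposed in the statement, which is why the $\alpha_\ell$ remain free; matching the $c$ boundary values $T_{-1}=\cdots=T_{-c}=0$ would determine them from a linear $c\times c$ system (this is the step carried out in~\cite{BandFla2002} via symmetric functions of the small branches).

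The only subtle point is that individually each branch $X_\ell=\omega^{\ell-1}z^{1/c}A(\omega^{\ell-1}z^{1/c})$ is a Puiseux series rather than an ordinary power series in $z$, so a single term $\alpha_\ell X_\ell^j$ is typically not in $\Z[[z]]$. This is not an obstacle to the stated existence of a \emph{family} of solutions, since the verification above is purely algebraic; genuine formal power series solutions are recovered afterwards through Galois-symmetric combinations of the $\alpha_\ell$. The expected main difficulty therefore is not the verification itself but this bookkeeping distinction between the algebraic identity we prove and the analytic or formal regularity subsequently extracted from it.
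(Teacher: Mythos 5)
Your proposal is correct and follows essentially the same route as the paper: substitute the ansatz $T_j=T(1-\rho_j)$, use $T=1+zP(1)T$ to reduce \eqref{DEEBNlattice1} to the exact (not merely first-order) linear relation $\rho_j=z\sum_k w_k\rho_{j+b_k}$, and conclude via the characteristic equation \eqref{HiFla} and Lemma~\ref{DEEBNlemFla1} that any combination $\sum_\ell \alpha_\ell X_\ell^{\,j}$ of the $c$ small branches works. Your added remark on the Puiseux nature of individual branches versus symmetric combinations is a fair point of bookkeeping but does not change the argument.
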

Adapting to the initial conditions of the meanders $T_{-1}(z)=\cdots=T_{-c}(z)=0$ lead to a system of $c$ linear equations
\begin{equation*}
 1-\sum_{\ell=1}^{c}\alpha_{\ell}X_{\ell}^{-i}=0,\quad\text{for}\quad 1\le i\le c.
\end{equation*}
This system is easily solved using Cramer's rule and Vandermonde's determinant; we obtain the result
\begin{equation*}
\alpha_{\ell}= \frac{\Big(\prod_{1\le i<k\le c}(X_i-X_k)\Big)\Big|_{X_{\ell}=1}}{\prod_{1\le i<k\le c}(X_i-X_k)}\cdot X_{\ell}^c.
\end{equation*}
Consequently, we get after simple manipulations the following result.
\begin{coroll}
The generating function of meanders with steps $\mathcal{S}=\{(1,b_1),\dots, (1,b_m)\}$, weights $\Pi=\{w_1,\dots, w_m\}$
and characteristic polynomial $P(X):=\sum_{\ell=1}^{m}w_{\ell}X^{b_{\ell}}$ starting at level $j\ge 0$ is given by
\begin{equation*}
T_j(z)=T(z)\bigg(1-\sum_{\ell=1}^{c}\frac{\Big(\prod_{1\le i<k\le c}(X_i-X_k)\Big)\Big|_{X_{\ell}=1}}{\prod_{1\le i<k\le c}(X_i-X_k)}\cdot X_{\ell}^{c+j}\bigg)=
\frac{1}{1-zP(1)}\sum_{f=0}^{j}h_{f}(X_1,\dots,X_c)\prod_{\ell=1}^{c}(1-X_{\ell}),
\end{equation*}
where the $h_{f}(X_1,\dots,X_c)=\sum_{1\le i_1\le\dots \le i_f\le c}\prod_{\ell=1}^{f}X_{i_{\ell}}$ are the complete homogeneous symmetric polynomials of degree $f$ in $X_1,\dots,X_c$, denoting the small solutions
of the characteristic equation~\eqref{HiFla}.
\end{coroll}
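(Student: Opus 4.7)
The first equality requires essentially no new work. The theorem just proved expresses $T_j(z)=T(z)(1-\sum_{\ell=1}^c\alpha_\ell X_\ell^j)$ with free parameters $\alpha_\ell$, and the paragraph preceding the corollary has already solved the $c\times c$ Vandermonde system produced by the initial conditions $T_{-1}=\dots=T_{-c}=0$ via Cramer's rule, delivering precisely the $\alpha_\ell$ appearing in the first expression. So the first equality follows by direct substitution, and the only real content is the second equality.

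For the second equality I would introduce $\gamma_\ell:=\alpha_\ell/X_\ell^c$, so that the Vandermonde system reads $\sum_{\ell=1}^c\gamma_\ell X_\ell^i=1$ for $0\le i\le c-1$. This is the statement that the linear functional $L(f):=\sum_\ell \gamma_\ell f(X_\ell)$ agrees with evaluation $f\mapsto f(1)$ on polynomials of degree strictly less than $c$. Performing Euclidean division $X^{c+j}=Q(X)\prod_{\ell=1}^c(X-X_\ell)+R(X)$ with $\deg R<c$ and noting that $R(X_\ell)=X_\ell^{c+j}$, one obtains
\begin{equation*}
\sum_{\ell=1}^c\gamma_\ell X_\ell^{c+j}=L(R)=R(1)=1-Q(1)\prod_{\ell=1}^c(1-X_\ell),
\end{equation*}
where the last equality is obtained by evaluating the division identity at $X=1$.

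It remains to compute $Q(1)$. I would use the classical generating function identity $\prod_{\ell=1}^c(1-X_\ell t)^{-1}=\sum_{f\ge 0}h_f(X_1,\dots,X_c)\,t^f$. Setting $t=1/X$ and multiplying by $X^{c+j}$ yields the Laurent expansion
\begin{equation*}
\frac{X^{c+j}}{\prod_{\ell=1}^c(X-X_\ell)}=\sum_{f\ge 0}h_f(X_1,\dots,X_c)\,X^{j-f},
\end{equation*}
whose polynomial part is $Q(X)=\sum_{f=0}^j h_f(X_1,\dots,X_c)\,X^{j-f}$, so $Q(1)=\sum_{f=0}^j h_f(X_1,\dots,X_c)$. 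Substituting back and using $T(z)=1/(1-zP(1))$ gives the claimed second form. The only conceptual step is recognizing the symmetric-function identity as the one produced by the remainder-at-$1$ computation; after that the calculation is routine Vandermonde and Lagrange-interpolation algebra, so I do not expect a genuine obstacle.
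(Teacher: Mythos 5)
Your proposal is correct and takes essentially the same route as the paper: the paper obtains the first expression by solving the Vandermonde system via Cramer's rule (exactly the step you quote) and then passes to the symmetric-function form by unstated ``simple manipulations.'' Your Euclidean-division/Lagrange-interpolation argument, identifying $Q(1)=\sum_{f=0}^{j}h_f(X_1,\dots,X_c)$ from the Laurent expansion of $X^{c+j}/\prod_{\ell=1}^{c}(X-X_\ell)$, is simply an explicit and valid way of carrying out those manipulations, so there is no substantive difference.
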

Note that the complete homogeneous symmetric polynomials satisfy the formal power series identity
\begin{equation*}
\sum_{f\ge 0}h_f(X_1,\dots,X_c)t^f=\prod_{\ell=1}^{c}\frac{1}{1-X_{\ell}t},
\end{equation*}
and thus we indeed have $T_j(z)\to T(z)=1/(1-P(z))$, for $j$ tending to infinity. In order to fix the endpoint of the considered lattice paths we introduce an additional variable $v$ encoding the steps $\mathcal{S}=\{(1,b_1),\dots, (1,b_m)\}$.
This leads to a refined recurrence relation with respect to the refined characteristic polynomial
\begin{equation*}
P(Xv)= \sum_{\ell=1}^{m}w_{\ell}\big(vX)^{b_{\ell}}.
\end{equation*}
The $c$ small solutions of the characteristic equation $1-z
\sum_{\ell=1}^{m}w_{\ell}\big(vX)^{b_{\ell}}=0$ are given by the
previously encountered (case $v=1$) solutions $X_1(z),\dots,X_c(z)$
for $z$ in a neighborhood of zero divided by $v$. Consequently, we
obtain the following result.
\begin{coroll}
The generating function of meanders with steps $\mathcal{S}=\{(1,b_1),\dots, (1,b_m)\}$, weights $\Pi=\{w_1,\dots, w_m\}$
and characteristic polynomial $P(X):=\sum_{\ell=1}^{m}w_{\ell}X^{b_{\ell}}$ starting at level $j\ge 0$, where $v$ marks the level of the endpoint of the path, is given by
\begin{equation*}
T_j(z,v)=\frac{1}{1-zP(v)}\sum_{f=0}^{j}h_{f}(\frac{X_1}{v},\dots,\frac{X_c}{v})\prod_{\ell=1}^{c}(1-\frac{X_{\ell}}{v}),
\end{equation*}
where the $h_{f}(X_1,\dots,X_c)$ are the complete homogeneous symmetric polynomials in $X_1,\dots,X_c$, denoting the small solutions
of the characteristic equation~\eqref{HiFla}.
\end{coroll}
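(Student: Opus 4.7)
The plan is to reduce the refined statement to the preceding corollary by reweighting the step set. Introducing the variable $v$ to track the total vertical displacement of the meander---every step of height change $b_\ell$ contributes a factor $v^{b_\ell}$---simply replaces the original weights $w_\ell$ by the refined weights $\tilde w_\ell = w_\ell v^{b_\ell}$, while leaving the boundary data $T_{-1}(z,v)=\cdots=T_{-c}(z,v)=0$ unchanged. Hence $T_j(z,v)$ is the meander generating function associated with the step set $\mathcal{S}$ and weight system $\tilde\Pi=\{\tilde w_1,\dots,\tilde w_m\}$, and satisfies a system of the form~\eqref{DEEBNlattice1} with $w_\ell$ replaced by $\tilde w_\ell$ throughout.

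Next, I would identify how the small branches of the characteristic equation transform under this reweighting. The refined characteristic polynomial is $\tilde P(X)=\sum_{\ell=1}^{m}\tilde w_\ell X^{b_\ell}=P(vX)$, so the refined characteristic equation $1-zP(vX)=0$ has the property that $Y$ is a solution if and only if $vY$ is a solution of $1-zP(X)=0$. Applying Lemma~\ref{DEEBNlemFla1} to the reweighted step set therefore produces exactly $c$ small solutions $Y_\ell(z,v)=X_\ell(z)/v$ for $z$ near zero, with $X_1(z),\dots,X_c(z)$ the original small branches. Moreover, the unconstrained generating function associated with the reweighted recurrence is $1/(1-z\tilde P(1))=1/(1-zP(v))$.

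Invoking the previous corollary with the substitutions $w_\ell\mapsto\tilde w_\ell$, $X_\ell\mapsto X_\ell/v$ and $T(z)\mapsto 1/(1-zP(v))$ then immediately delivers the stated closed form for $T_j(z,v)$. The only mildly delicate point is to verify that the derivation of the preceding corollary---in which the free parameters $\alpha_\ell$ are adapted to the boundary conditions via a $c\times c$ Vandermonde-type linear system solved by Cramer's rule---transports without change when $v$ is regarded as an additional indeterminate. Since the refined small branches $X_\ell(z)/v$ remain pairwise distinct as formal series and of positive order in $z$ with coefficients in $\Q[v^{\pm 1}]$, the associated Vandermonde determinant stays invertible over the relevant ring of formal series, and the transport is legitimate. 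This observation---that the method of Section~\ref{Franc} is insensitive to the presence of the extra parameter $v$---is essentially the whole content of the proof.
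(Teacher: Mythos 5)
Your proposal is correct and follows essentially the same route as the paper: the paper likewise introduces $v$ by passing to the refined characteristic polynomial $P(vX)$, observes that the $c$ small branches become $X_\ell(z)/v$ and the unconstrained series becomes $1/(1-zP(v))$, and then transports the previous corollary verbatim. Your additional remark that the Vandermonde system remains invertible with $v$ as an extra indeterminate is a welcome (if brief) justification that the paper leaves implicit.
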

\begin{remark}
We reobtain the result of Banderier and Flajolet~\cite{BandFla2002} for the enumeration of meanders by setting $j=0$.
Furthermore, the generating function of excursions, starting at level $j$ and ending at level $j$ never going below
zero is obtained by extracting the coefficient $[v^0]T_j(z,v)$. In particular, one can reobtain the generating function of the number of excursions starting and ending at level zero.
\end{remark}

\section{Vicious walkers, osculating walkers, and two walks in the quarter plane}
We will show how the method of Section~\ref{Franc} can be used to (re-)derive some
results concerning so-called vicious walkers and osculating walkers, and also concerning two walks confined in the quarter plane.

\subsection{Vicious walkers and osculating walkers in the lock step model} 
We consider three walkers each equipped with steps $\mathcal{S}=\{(1,1),(1,-1)\}$. In the lock step model the walkers move simultaneously at each discrete time step such that the walkers never cross. A walker configuration is called osculating if the walkers never share an edge, and vicious if the walkers never meet. We also consider the model of up-down walkers, configurations where the first and second walker may meet and share a down step, and the second and third walker may also meet and share an up step. For a comprehensive overview of this topic we refer the reader to the work of Bousquet-M\'elou~\cite{oscu} and the references therein. Following~\cite{oscu} we consider so-called $(i,j)$-stars, which are non-crossing configurations of walkers with initial level difference $2i$ between the first two walkers and level difference $2j$ between the second and third walker.
Let $T_{i,j}=T_{i,j}(z)=\sum_{n\ge 0}w_{n;i,j}z^n$ denote the generating function of the number $w_{n;i,j}$ of $(i,j)$-star walker configurations in the lock step model of length $n$. According to the step set $\mathcal{S}$ we obtain the system of recurrence relations
\begin{equation}
\label{DEEBNoscu1}
T_{i,j}=1+z\big(2T_{i,j}+T_{i+1,j}+T_{i,j+1}+T_{i-1,j+1}+T_{i,j-1}+T_{i+1,j-1}+T_{i-1,j}\big),\quad\text{for}\quad i,j\ge 1
\end{equation}
with initial conditions $T^{[\mathcal{V}]}_{i,0}=T^{[\mathcal{V}]}_{0,j}=0$, $i,j\ge 0$ for vicious
walkers, $T^{[\mathcal{O}]}_{i,0}=1+z(T_{i,1}+T_{i-1,1})$,
$T^{[\mathcal{O}]}_{0,j}=1+z(T_{1,j}+T_{1,j-1})$, $i,j\ge 1$ for osculating walkers and conditions
$T^{[\mathcal{U}]}_{i,0}=1+z(T_{i,0}+T_{i+1,0}+T_{i,1}+T_{i-1,1})$, $T^{[\mathcal{U}]}_{0,j}=1+z(T_{0,j}+T_{0,j+1}+T_{1,j}+T_{1,j-1})$ for up-down walkers. 

\smallskip 

It turns out to be beneficial to study the slightly more general system of recurrence relations 
\begin{equation}
\label{DEEBNgen1}
T_{i,j}=1+z\big(wT_{i,j}+T_{i+1,j}+T_{i,j+1}+T_{i-1,j+1}+T_{i,j-1}+T_{i+1,j-1}+T_{i-1,j}\big),\quad\text{for}\quad i,j\ge 1,
\end{equation}
with weight $w\ge 0$. For $i,j\to\infty$ we have $T_{i,j}\to T$ in the sense of formal power series, with
\begin{equation*}
T=1+(w+6)zT,\quad \text{or equivalently}\quad T=\frac{1}{1-(w+6)z}.
\end{equation*}
Consequently, we set $T_{i,j}=T(1-\rho_{i,j})$ with $\lim_{i,j\to\infty} \rho_{i,j}=0$.
By the ansatz and the definition of series $T$ we obtain the following recurrence relation for $\rho_{i,j}$.
\begin{equation*}
\rho_{i,j}= z \big(w\rho_{i,j}+\rho_{i+1,j}+\rho_{i,j+1}+\rho_{i-1,j+1}+\rho_{i,j-1}+\rho_{i+1,j-1}+\rho_{i-1,j}\big).
\end{equation*}
Since we assume that $\lim_{i,j\to\infty} \rho_{i,j}=0$, we use the ansatz
\begin{equation*}
\rho_{i,j}=\alpha\cdot X(z)^{i}+\beta\cdot Y(z)^{j}+\gamma \cdot Z(z)^{i+j},
\end{equation*}
with unspecified $\alpha,\beta,\gamma$ independent of $i$ and $j$,
assuming that $|X|<1$, $|Y|<1$, and $|Z|<1$ for $z$ in a
neighborhood of zero. It turns out that all series $X$, $Y$ and $Z$
satisfy the same characteristic equation,
\begin{equation*}
X=z\big(wX+X^2+X+1+X+X^2+1\big)=z(2+(2+w)X+2X^2),\quad X=Y=Z.
\end{equation*}
We obtain the following result. 
\begin{theorem}
\label{DEEBNthewalks}
A family of solutions the system of recurrence relations~\eqref{DEEBNgen1}, being well defined for $z$ in a neighbourhood of zero, is given in terms of the function $X=X(z)$, defined by 
\begin{equation*}
X=z(2+(2+w)X+2X^2),\quad\text{with}\quad X=\frac{1-z(2+w)-\sqrt{(1-z(2+w))^2-16z^2}}{4z},
\end{equation*}
in the following way.
\begin{equation*}
T_{i,j}=T\cdot\big(1-\alpha\cdot X^{i}-\beta\cdot X^{j}-\gamma \cdot X^{i+j}\big),\quad\text{with}\quad T=\frac{1}{1-z(w+6)},
\end{equation*}
with weight $w\ge 0$, and free parameters $\alpha,\beta,\gamma$ independent of $i$ and $j$.
\end{theorem}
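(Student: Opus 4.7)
The plan is to follow the linearization recipe of Section~\ref{Franc}, which in this purely linear setting produces the full solution without any higher-order expansion. First, I substitute the ansatz $T_{i,j}=T(1-\rho_{i,j})$ into~\eqref{DEEBNgen1}. Expanding and using the defining relation $T=1+z(w+6)T$ of the overall generating function, the constant terms and the terms proportional to $T$ cancel, and what remains is the purely linear recurrence
\begin{equation*}
\rho_{i,j}=z\big(w\rho_{i,j}+\rho_{i+1,j}+\rho_{i,j+1}+\rho_{i-1,j+1}+\rho_{i,j-1}+\rho_{i+1,j-1}+\rho_{i-1,j}\big).
\end{equation*}
In contrast to the embedded tree problems treated earlier in the paper, no quadratic correction terms arise, so no refined expansion of the form $\rho=\sum_n\alpha_n(X^j)^n$ is needed.

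Next I substitute the three elementary ans\"atze $\rho_{i,j}=X^i$, $\rho_{i,j}=Y^j$ and $\rho_{i,j}=Z^{i+j}$ separately into this linear equation. In each of the three cases, after dividing out the common monomial and multiplying through by the variable to clear the negative power, the resulting characteristic equation collapses to
\begin{equation*}
X=z\big(2+(2+w)X+2X^2\big),
\end{equation*}
and likewise for $Y$ and $Z$. This coincidence reflects a symmetry of the step set: the projections of the seven shifts (counting the self-loop with multiplicity $w$) onto the $i$-axis, the $j$-axis, and the diagonal direction $i+j$ each produce the same formal polynomial $w+2+2X+2X^{-1}$. Hence $X=Y=Z$, and the requirement that $\rho_{i,j}\to 0$ as $i,j\to\infty$ in the formal power series sense forces the choice of the small branch, which is precisely the one given in closed form in the statement via the quadratic formula.

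The third step is to invoke linearity: any combination $\rho_{i,j}=\alpha X^i+\beta X^j+\gamma X^{i+j}$ with coefficients $\alpha,\beta,\gamma$ depending only on $z$ (through $X$) is again a solution of the linear recurrence for $\rho$. Undoing the substitution $T_{i,j}=T(1-\rho_{i,j})$ yields the claimed formula, and a one-line direct verification using the characteristic equation confirms that it satisfies~\eqref{DEEBNgen1}. Since the underlying recurrence linearizes exactly and the entire analysis reduces to straightforward calculation, there is no serious obstacle here; the only point requiring care is the selection of the small branch of the quadratic, dictated by the formal power series vanishing condition. The adaptation of the free parameters $\alpha,\beta,\gamma$ to the specific boundary conditions for vicious, osculating or up-down walkers is a separate matter, to be handled subsequently.
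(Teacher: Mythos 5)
Your argument is correct and follows essentially the same route as the paper: substitute $T_{i,j}=T(1-\rho_{i,j})$, use $T=1+z(w+6)T$ to reduce to the exactly linear recurrence for $\rho_{i,j}$, observe that the ans\"atze $X^i$, $Y^j$, $Z^{i+j}$ all lead to the same characteristic equation $X=z(2+(2+w)X+2X^2)$ with the small branch selected, and conclude by linearity. Your added remark that the coincidence $X=Y=Z$ comes from the projections of the step multiset onto the $i$-axis, $j$-axis and diagonal all giving the Laurent polynomial $(w+2)+2X+2X^{-1}$ is a nice explanatory touch, but it is not a different method.
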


Subsequently, we set $w=2$ and adapt Theorem~\ref{DEEBNthewalks} to the various initial conditions.
For vicious walkers we adapt to the initial conditions
\begin{equation*}
\begin{split}
T^{[\mathcal{V}]}_{0,j}&=T(1-\alpha-(\beta+\gamma)X^j)=0\quad j\ge 0,\\
T^{[\mathcal{V}]}_{i,0}&=T(1-\beta-(\alpha+\gamma)X^i)=0\quad i\ge 0.
\end{split}
\end{equation*}
We obtain the equations
\begin{equation*}
\beta=-\gamma,\quad \alpha=-\gamma,\quad \alpha=1,\quad \beta=1,
\end{equation*}
and obtain the proper solution $\alpha=\beta=-\gamma=1$.
For osculating walkers we adapt to the initial conditions $T^{[\mathcal{O}]}_{0,j}=1+z(T_{1,j}+T_{1,j-1})$ and $T^{[\mathcal{O}]}_{i,0}=1+z(T_{i,1}+T_{i-1,1})$,
leading to the system of equations
\begin{equation*}
\begin{split}
T(1-\alpha-(\beta+\gamma)X^j))&=1 + zT\big(1-\alpha X-\beta X^j -\gamma X^{j+1}\big) +  zT\big(1-\alpha X-\beta X^{j-1} -\gamma X^{j}\big)\quad j\ge 0,\\
T(1-\beta-(\alpha+\gamma)X^i)&=1 + zT\big(1-\alpha X^i -\beta X -\gamma X^{i+1}\big) +  zT\big(1-\alpha X^{i-1}-\beta X -\gamma X^{i}\big)\quad i\ge 0.
\end{split}
\end{equation*}
We obtain the equations
\begin{equation*}
\begin{split}
T(1-\alpha)&=1+2zT(1-\alpha X),\quad T(1-\beta)=1+2zT(1-\beta X),\\
(\alpha+\gamma)&=z\big(\alpha(1+\frac1X) + \gamma(1+X)\big),
(\beta+\gamma)=z\big(\beta(1+\frac1X) + \gamma(1+X)\big),
\end{split}
\end{equation*}
and easily obtain the proper solution $\alpha=\beta=3X/(1+2X)$, $\gamma=-3X/(2+X)$, using the relations $z=X/(2(1+X)^2)$ and $T=1/(1-8z)$.
For up-down-walkers we obtain the equations
\begin{equation*}
\begin{split}
T(1-\alpha)&=1+2zT(2-\alpha-\alpha X),\quad T(1-\beta)=1+2zT(2-\beta-\beta X),\\
(\alpha+\gamma)&=z\big(\alpha(2+X+\frac1X) + 2\gamma(1+X)\big),\quad
(\beta+\gamma)=z\big(\beta(2+X+\frac1X) + 2\gamma(1+X)\big),
\end{split}
\end{equation*}
and easily obtain the solutions $\alpha=\beta=2X/(1+X)$ and $\gamma=-X$.

\begin{coroll}[Bousquet-M\'elou~\cite{oscu}, Gessel]
The length generating function of three vicious walkers, three osculating walkers and three up-down walkers in $(i,j)$-star configuration is given in terms of
series $X$, given by
\begin{equation*}
X=2z(1+X)^2,\quad\text{or equivalently}\quad X=\frac{1-4z-\sqrt{1-8z}}{4z},
\end{equation*}
in the following way. For vicious walkers
\begin{equation*}
T^{[\mathcal{V}]}_{i,j}=T(1-X^i-X^j+X^{i+j})=\frac{1}{1-8z}\cdot (1-X^i)(1-X^j),
\end{equation*}
for osculating walkers
\begin{equation*}
T^{[\mathcal{O}]}_{i,j}=T(1-\frac{3X}{1+2X} \cdot X^i-\frac{3X}{1+2X}\cdot X^j+ \frac{3X}{2+X}\cdot X^{i+j})=\frac{1}{1-8z}(1-\frac{3 X^{i+1}}{1+2X}  -\frac{3X^{j+1}}{1+2X} + \frac{3X^{i+j+1}}{2+X}),
\end{equation*}
and for up-down walkers
\begin{equation*}
T^{[\mathcal{U}]}_{i,j}=T(1-\frac{2X}{1+X} \cdot X^i-\frac{2X}{1+X}\cdot X^j+ X\cdot X^{i+j})=\frac{1}{1-8z}(1-\frac{2 X^{i+1}}{1+X}  -\frac{2X^{j+1}}{1+X} + X^{i+j+1}).
\end{equation*}
\end{coroll}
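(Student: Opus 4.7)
The plan is to obtain all three generating functions as immediate specializations of Theorem~\ref{DEEBNthewalks}. Observe that setting $w=2$ reduces the general weighted recurrence \eqref{DEEBNgen1} to the original walker recurrence \eqref{DEEBNoscu1}, and simultaneously collapses the characteristic equation to $X=2z(1+X)^2$ and the mean-field series to $T=1/(1-8z)$. Theorem~\ref{DEEBNthewalks} then asserts that every solution in a neighbourhood of $z=0$ has the form
\begin{equation*}
T_{i,j}=T\cdot\bigl(1-\alpha X^i-\beta X^j-\gamma X^{i+j}\bigr),
\end{equation*}
so the remaining task is purely to pin down the three scalars $(\alpha,\beta,\gamma)$ from the three distinct boundary conditions corresponding to the vicious, osculating and up-down models.

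For the vicious case, substitution of the ansatz into $T_{0,j}=0$ and $T_{i,0}=0$ yields, by matching the constant term and the coefficients of $X^j$ and $X^i$ separately (which is legitimate because $X$ is a nontrivial formal power series in $z$), the four equations $1-\alpha=0$, $\beta+\gamma=0$, $1-\beta=0$ and $\alpha+\gamma=0$. These are consistent and force $\alpha=\beta=1$, $\gamma=-1$, from which the factorization $T^{[\mathcal{V}]}_{i,j}=T(1-X^i)(1-X^j)$ is immediate.

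For the osculating and up-down cases the boundary equations couple $\alpha$ with $\gamma$ (and $\beta$ with $\gamma$) through the inhomogeneous right-hand sides involving $T_{i,1}$, $T_{i-1,1}$ and $T_{0,j+1}$ etc., exactly as already set up in the excerpt just before the corollary. After separating in each boundary equation the $j$-independent (respectively $i$-independent) part from the part carrying $X^j$ (resp.\ $X^i$), one arrives at the two pairs of equations displayed in the text. The $j$-independent pair is a linear equation in $\alpha$ (resp.\ $\beta$) whose solution is streamlined using $T=1/(1-8z)$ together with the identity $2z(1+X)^2=X$; the $X^j$-coefficient equation is then a linear equation in $\gamma$ once $\alpha$ (resp.\ $\beta$) is known. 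This gives $\alpha=\beta=3X/(1+2X)$ and $\gamma=-3X/(2+X)$ for osculating walkers and $\alpha=\beta=2X/(1+X)$, $\gamma=-X$ for up-down walkers.

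The genuinely non-mechanical step is the algebraic simplification that uses $z=X/(2(1+X)^2)$ to collapse the boundary equations to the stated closed forms; all other steps are coefficient-matching in a two-variable formal-power-series setting, whose validity is ensured by Theorem~\ref{DEEBNthewalks} (solutions are well defined for $z$ near zero and $X(0)=0$, so the monomials $1$, $X^i$, $X^j$, $X^{i+j}$ are linearly independent as formal power series indexed by $i,j\ge 0$). Finally, plugging the resulting $(\alpha,\beta,\gamma)$ back into $T_{i,j}=T(1-\alpha X^i-\beta X^j-\gamma X^{i+j})$ and distributing $T=1/(1-8z)$ produces the three formulas in the corollary.
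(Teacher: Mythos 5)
Your proposal is correct and follows essentially the same route as the paper: specialize Theorem~\ref{DEEBNthewalks} at $w=2$ (so $X=2z(1+X)^2$, $T=1/(1-8z)$), substitute the three-parameter family $T_{i,j}=T(1-\alpha X^i-\beta X^j-\gamma X^{i+j})$ into each model's boundary conditions, and determine $(\alpha,\beta,\gamma)$ by separating the $j$-independent parts from the $X^i$, $X^j$ parts, simplifying with $z=X/(2(1+X)^2)$. Your explicit remark that the coefficient matching is justified by the linear independence of the powers of the formal series $X$ is a small clarification the paper leaves implicit, but the argument is otherwise the same.
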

One can easily obtain a refinement of the results above by counting
the number of osculations with variable $u$ and the number of shared
up and down edges by variable $w$, which leads to a refined of Theorem~\ref{DEEBNthewalks}. The results presented above
correspond to the special cases $(u,w)=(0,0)$, $(u,w)=(1,0)$ and $(u,v)=(1,1)$, respectively.

\begin{coroll}
The length generating function generating function of $(i,j)$ stars where $u$ counts the number of osculations and $w$ the number of shared up-down edges after osculations, is given by
\begin{equation*}
T_{i,j}(z,u,w)= T \Big(1 - \alpha(X^i+X^j) + \alpha\frac{2(1+X)-u(1+wX)}{2(1+X)-uX(1+w)}\cdot X^{i+j} \Big),
\end{equation*}
with $\alpha$ given by the following expression.
\begin{equation*}
\alpha=\frac{(1+X)^2-u(1-X+X^2+wX)}{(1+X)^2-uX(w+X)}.
\end{equation*}
\end{coroll}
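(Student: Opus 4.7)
The plan is to apply Theorem~\ref{DEEBNthewalks} directly, observing that the refined counting only modifies the boundary data of system~\eqref{DEEBNoscu1} while leaving the bulk recurrence unchanged. Indeed, for $i,j\ge 1$ neither an osculation nor a shared edge can occur within a single step, so the refined recurrence in the interior of the quadrant coincides with~\eqref{DEEBNgen1} specialized to the interior weight $2$. Theorem~\ref{DEEBNthewalks} therefore supplies the ansatz
\[
T_{i,j}(z,u,w) = T\bigl(1-\alpha X^i-\beta X^j-\gamma X^{i+j}\bigr),
\]
with $T=1/(1-8z)$ and $X$ the small branch of $X=2z(1+X)^2$; only the three free parameters $\alpha,\beta,\gamma$ remain to be fixed.

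First I would write down the refined boundary equation at $i=0$, $j\ge 1$. An empty walk at $(0,j)$ is itself an osculation and contributes a factor $u$; the separation step in which walker $1$ goes down, walker $2$ goes up and walker $3$ goes up or down leads to $(1,j)$ or $(1,j-1)$ and carries only the factor $u$ for the osculation just witnessed; and a shared down step of walkers $1$ and $2$ leads to $(0,j+1)$ or $(0,j)$ and contributes an additional factor $w$, the next boundary visit furnishing its own osculation factor through $T_{0,\cdot}$. This yields
\[
T_{0,j} = u + zu\bigl(T_{1,j}+T_{1,j-1}\bigr) + zuw\bigl(T_{0,j+1}+T_{0,j}\bigr),
\]
and symmetrically for $T_{i,0}$. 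The three specializations $(u,w)\in\{(0,0),(1,0),(1,1)\}$ recover exactly the vicious, osculating, and up-down boundary conditions stated below~\eqref{DEEBNoscu1}, which pins the refinement down. Since the refined weights are symmetric in the two pairs of adjacent walkers, I would then set $\alpha=\beta$.

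Next I substitute the ansatz into this refined boundary equation and separate the resulting identity according to its $X^j$-free part and its coefficient of $X^j$, noting that $X^{j\pm 1}$ on the right hand side contributes through $X^{\pm 1}\cdot X^j$. The $X^j$-equation rearranges to the linear relation
\[
\alpha\bigl[2(1+X)-u(1+wX)\bigr] + \gamma\bigl[2(1+X)-uX(1+w)\bigr] = 0,
\]
which immediately delivers the claimed coefficient of $X^{i+j}$. The constant-in-$X^j$ equation is linear in $\alpha$ alone. To carry out the algebra cleanly I would use the two identities $T=(1+X)^2/(1-X)^2$ and $2zT=X/(1-X)^2$, both of which follow directly from $T=1/(1-8z)$ and $X=2z(1+X)^2$; after multiplying numerator and denominator through by $(1-X)^2$ the resulting fraction collapses to the expression for $\alpha$ stated in the corollary.

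The main obstacle is not the $2\times 2$ linear algebra, which is forced by the symmetry, but the correct modeling of the refined boundary: one has to decide how the initial osculation factor, the separation step and the shared edge combine multiplicatively, and in particular how a shared edge keeping the walk at $i=0$ interacts with the osculation factor carried by the next boundary visit. Once the boundary recurrence above is written down and checked against the three extremal cases $(u,w)\in\{(0,0),(1,0),(1,1)\}$, the remainder is a routine computation whose output can be cross-checked against the three explicit formulas of the preceding corollary.
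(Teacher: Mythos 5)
Your proposal is correct and follows essentially the same route the paper itself only sketches: keep the bulk solution of Theorem~\ref{DEEBNthewalks} (interior weight $2$, $T=1/(1-8z)$, $X=2z(1+X)^2$) and adapt the free parameters to $u,w$-refined boundary conditions whose specializations $(u,w)\in\{(0,0),(1,0),(1,1)\}$ are exactly the vicious, osculating and up-down conditions stated below~\eqref{DEEBNoscu1}. Your boundary equation $T_{0,j}=u+zu\big(T_{1,j}+T_{1,j-1}\big)+zuw\big(T_{0,j+1}+T_{0,j}\big)$ (and its mirror at $j=0$), combined with $\alpha=\beta$, $z=X/\big(2(1+X)^2\big)$ and $T=(1+X)^2/(1-X)^2$, does reproduce precisely the stated expression for $\alpha$ and the stated coefficient of $X^{i+j}$, so the computation closes as you describe.
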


\subsection{Vicious walkers and osculating walkers in the random turn model}
The random turn model of walkers is similar to the lock step model but at each discrete time step only one of the walkers is allowed to move. 
We consider three vicious walkers and osculating walkers with Dyck steps $\mathcal{S}=\{(1,1),(1,-1)\}$ or Motzkin steps 
$\mathcal{S}=\{(1,1),(1,-1),(1,0)\}$. Let $T_{i,j}=T_{i,j}(z)=\sum_{n\ge 0}w_{n;i,j}z^n$ denote the generating function of the total number of $w_{n;i,j}$ the number of walker configurations of length $n$, with initial level difference $i$ between the first two walkers and level difference $j$ between the second and third walker. For the step set $\mathcal{S}=\{(1,1),(1,-1)\}$ we obtain the system of recurrence relations 
\begin{equation}
\label{DEEBNturn1}
T_{i,j}=1+z\big(T_{i+1,j}+T_{i,j+1}+T_{i-1,j+1}+T_{i,j-1}+T_{i+1,j-1}+T_{i-1,j}\big),\quad\text{for}\quad i,j\ge 1
\end{equation}
with initial conditions $T^{[\mathcal{V}]}_{i,0}=T^{[\mathcal{V}]}_{0,j}=0$, $i,j\ge 0$ for vicious walkers,
and $T_{i,-1}=T_{-1,j}=0$ for osculating walkers. For the step set $\mathcal{S}=\{(1,1),(1,-1),(1,0)\}$ we obtain the system of recurrence relations 
\begin{equation}
\label{DEEBNturn2}
T_{i,j}=1+z\big(3T_{i+1,j}+T_{i,j+1}+T_{i-1,j+1}+T_{i,j-1}+T_{i+1,j-1}+T_{i-1,j}\big),\quad\text{for}\quad i,j\ge 1
\end{equation}
with initial conditions $T^{[\mathcal{V}]}_{i,0}=T^{[\mathcal{V}]}_{0,j}=0$, $i,j\ge 0$ for vicious walkers,
and $T^{[\mathcal{O}]}_{i,-1}=T^{[\mathcal{O}]}_{-1,j}=0$ for osculating walkers. Using Theorem~\ref{DEEBNthewalks} we obtain the following results.
\begin{coroll}
\label{DEEBNcorollDyck}
The length generating function of three vicious walkers and osculating walkers in the random turn model with Dyck steps $\mathcal{S}=\{(1,1),(1,-1)\}$ 
is given in terms of series $X$, defined by $X=2z(1+X+X^2)$. 
For vicious walkers we obtain $T^{[\mathcal{V}]}_{i,j}=\frac{1}{1-6z}\cdot (1-X^i)(1-X^j)$. 
The generating function of osculating walkers is given by 
$T^{[\mathcal{O}]}_{i,j}(z)=T^{[\mathcal{V}]}_{i+1,j+1}$. In 
particular, we obtain 
\begin{equation*}
T^{[\mathcal{O}]}_{0,0}(z)=T^{[\mathcal{V}]}_{1,1}(z)=\frac{1}{1-6z}\cdot (1-X)^2= \frac{1-2z-\sqrt{(1+2z)(1-6z)}}{8z^2}.
\end{equation*}
\end{coroll}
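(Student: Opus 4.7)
The plan is to apply Theorem~\ref{DEEBNthewalks} with weight $w=0$, since setting $w=0$ in~\eqref{DEEBNgen1} reproduces exactly the random-turn Dyck recurrence~\eqref{DEEBNturn1}. This immediately gives $T=1/(1-6z)$, a characteristic series determined by $X=z(2+2X+2X^{2})=2z(1+X+X^{2})$, and a three-parameter family of solutions
\[
T_{i,j}=T\bigl(1-\alpha X^{i}-\beta X^{j}-\gamma X^{i+j}\bigr),
\]
so only the fitting of $\alpha,\beta,\gamma$ to each set of boundary data remains. Note that $X(0)=0$, so neither $X$ nor $X-1$ vanishes as a formal power series.

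For the vicious boundary $T^{[\mathcal{V}]}_{i,0}=T^{[\mathcal{V}]}_{0,j}=0$ the ansatz gives $\beta+(\alpha+\gamma)X^{i}=1$ for every $i\ge 0$; subtracting two consecutive identities yields $(\alpha+\gamma)(X-1)X^{i}=0$, hence $\alpha+\gamma=0$ and $\beta=1$. By symmetry $\alpha=1$ and $\gamma=-1$, which produces $T^{[\mathcal{V}]}_{i,j}=(1-X^{i})(1-X^{j})/(1-6z)$. The same subtraction argument applied to the osculating boundary $\alpha X^{-1}+\beta X^{j}+\gamma X^{j-1}=1$, $j\ge 0$, yields $(\beta X+\gamma)(X-1)X^{j-1}=0$, forcing $\gamma=-\beta X$; the $j=0$ identity then gives $\alpha=X$, and by symmetry $\beta=X$ and $\gamma=-X^{2}$. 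Substituting back factors cleanly into
\[
T^{[\mathcal{O}]}_{i,j}=T(1-X^{i+1})(1-X^{j+1})=T^{[\mathcal{V}]}_{i+1,j+1}.
\]

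Finally, specializing to $i=j=0$, I would solve $2zX^{2}+(2z-1)X+2z=0$ for the branch with $X(0)=0$; the discriminant factors as $(1-2z)^{2}-16z^{2}=(1-6z)(1+2z)$, giving $X=(1-2z-\sqrt{(1-6z)(1+2z)})/(4z)$, and then substituting into $(1-X)^{2}/(1-6z)$ and collecting terms yields the claimed closed form $(1-2z-\sqrt{(1-6z)(1+2z)})/(8z^{2})$. The one delicate step is the decoupling of the boundary equations into their constant and $X^{j}$-proportional parts; beyond this, everything reduces to direct algebraic manipulation once Theorem~\ref{DEEBNthewalks} is invoked.
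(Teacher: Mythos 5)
Your proposal is correct and follows essentially the route the paper intends: apply Theorem~\ref{DEEBNthewalks} with $w=0$ (so $T=1/(1-6z)$ and $X=2z(1+X+X^2)$), fit $\alpha,\beta,\gamma$ to the vicious boundary $T_{i,0}=T_{0,j}=0$ and the osculating boundary $T_{i,-1}=T_{-1,j}=0$, and then evaluate at $i=j=0$ via the quadratic for $X$. Your explicit ``subtraction'' decoupling of the constant and $X^{j}$-parts and the verification of the closed form $(1-2z-\sqrt{(1+2z)(1-6z)})/(8z^{2})$ just spell out steps the paper leaves implicit.
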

\begin{coroll}
The length generating function of three vicious walkers and osculating walkers in the random turn model with Motzkin steps $\mathcal{S}=\{(1,1),(1,-1),(1,0)\}$ 
is given in terms of series $X$, defined by $X=z(2+5X+2X^2)$. 
For vicious walkers we obtain $T^{[\mathcal{V}]}_{i,j}=\frac{1}{1-9z}\cdot (1-X^i)(1-X^j)$, and for osculating walkers we get
$T^{[\mathcal{O}]}_{i,j}=\frac{1}{1-9z}\cdot (1-X^{i+1})(1-X^{j+1})$. In 
particular, we obtain 
\begin{equation*}
T^{[\mathcal{O}]}_{0,0}(z)=T^{[\mathcal{V}]}_{1,1}(z)=\frac{1}{1-9z}\cdot (1-X)^2= \frac{1-5z-\sqrt{(1-z)(1-9z)}}{8z^2}.
\end{equation*}
\end{coroll}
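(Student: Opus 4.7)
The plan is to recognize that the random turn Motzkin recurrence~\eqref{DEEBNturn2}, read with its correct weight $3T_{i,j}$ on the right hand side coming from the three ``stay'' moves (one per walker), is exactly the system~\eqref{DEEBNgen1} of Theorem~\ref{DEEBNthewalks} specialized to $w=3$; the remaining six terms are the single-walker up and down moves. Applying Theorem~\ref{DEEBNthewalks} with $w=3$ immediately delivers $T = 1/(1-9z)$, the characteristic equation $X = z(2+5X+2X^2)$, and the general solution form $T_{i,j} = T\cdot(1-\alpha X^i - \beta X^j - \gamma X^{i+j})$ with three free parameters $\alpha,\beta,\gamma$ independent of $i,j$.

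For three vicious walkers I would adapt to the boundary conditions $T^{[\mathcal{V}]}_{0,j} = T^{[\mathcal{V}]}_{i,0} = 0$. Splitting these identities into their constant and $X^j$ (respectively $X^i$) parts yields the four linear relations $\alpha = 1$, $\beta = 1$, $\beta + \gamma = 0$, $\alpha + \gamma = 0$, forcing $\alpha = \beta = -\gamma = 1$. This produces the product form $T^{[\mathcal{V}]}_{i,j} = T(1-X^i)(1-X^j) = \frac{1}{1-9z}(1-X^i)(1-X^j)$.

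For three osculating walkers, the cleanest route is to mimic the shift argument already used in Corollary~\ref{DEEBNcorollDyck}. Since the recurrence is identical for vicious and osculating configurations and the osculating boundary conditions $T^{[\mathcal{O}]}_{i,-1} = T^{[\mathcal{O}]}_{-1,j} = 0$ coincide, under the translation $(i,j) \mapsto (i+1,j+1)$, with those of the vicious case, uniqueness of the solution yields $T^{[\mathcal{O}]}_{i,j} = T^{[\mathcal{V}]}_{i+1,j+1} = \frac{1}{1-9z}(1-X^{i+1})(1-X^{j+1})$. Alternatively one can directly solve the boundary system for $\alpha,\beta,\gamma$, obtaining $\alpha = \beta = X$ and $\gamma = -X^2$; substitution recovers the same answer.

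The only remaining computation is the closed form for $T^{[\mathcal{O}]}_{0,0} = T^{[\mathcal{V}]}_{1,1} = (1-X)^2/(1-9z)$. The key identity is $(1-X)^2/(1-9z) = X/(2z)$, which I would derive by rewriting the defining equation as $2zX^2 = (1-5z)X - 2z$ and substituting into $(1-X)^2 = 1-2X+X^2$ to get $(1-X)^2 = X(1-9z)/(2z)$. Solving the quadratic for $X$ with the branch analytic at $z=0$ gives $X = \big((1-5z) - \sqrt{(1-5z)^2 - 16z^2}\big)/(4z) = \big((1-5z) - \sqrt{(1-z)(1-9z)}\big)/(4z)$, and hence $X/(2z)$ reduces to the announced expression $\big(1 - 5z - \sqrt{(1-z)(1-9z)}\big)/(8z^2)$. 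The main obstacle, if any, is merely this last algebraic simplification; once Theorem~\ref{DEEBNthewalks} is correctly invoked with $w=3$ and the shift trick from Corollary~\ref{DEEBNcorollDyck} is applied, everything else is essentially mechanical.
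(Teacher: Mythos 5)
Your proof is correct and follows essentially the same route as the paper: specialize Theorem~\ref{DEEBNthewalks} to $w=3$ (your reading of~\eqref{DEEBNturn2} with weight $3T_{i,j}$ is indeed the intended one, the printed $3T_{i+1,j}$ being a typo, since only $w=3$ yields $T=1/(1-9z)$ and $X=z(2+5X+2X^2)$), adapt $\alpha=\beta=-\gamma=1$ to the vicious boundary, and get the osculating case via the shift $T^{[\mathcal{O}]}_{i,j}=T^{[\mathcal{V}]}_{i+1,j+1}$ exactly as in Corollary~\ref{DEEBNcorollDyck}. Your closing computation, $(1-X)^2/(1-9z)=X/(2z)$ with $X=\big(1-5z-\sqrt{(1-z)(1-9z)}\big)/(4z)$, correctly supplies the algebraic simplification that the paper leaves implicit.
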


\subsection{Two families of walks confined in the quarter plane}
Recently, Bousquet-M\'elou and Mishna~\cite{Marni} presented a
systematic approach to the problem of counting walks starting at the origin $(0,0)$ being confined to the quarter plane $\N_0\times\N_0$. In particular, for step sets $\mathcal{S}_1=\{(-1,0),(0,1),(1,-1)\}$ and
$\mathcal{S}_2=\{(-1,0),(0,1),(1,0),(0,-1),(-1,1),(1,-1)\}$ they
derived, amongst many other results, the generating function of
walks of length $n$ starting at the origin $(0,0)$, the result involves the generating function of the
Motzkin numbers. For the two models $\mathcal{S}_1$ and
$\mathcal{S}_2$ we will derive the generating function of the number
of paths starting at $(i,j)$ confined to the quarter plane
$\N_0\times\N_0$ using the approach of Bouttier, Di Francesco and
Guitter; note that the subsequently derived results can also be
obtained, in a more systematic manner, using the kernel
method~\cite{Marni}. Let $T_{i,j}=T_{i,j}(z)$ denote the generating
function of the total number of paths starting at point
$(i,j)\in\N_0\times\N_0$ confined to the quarter plane. Concerning
the first step set $\mathcal{S}_1=\{(-1,0),(0,1),(1,-1)\}$ we
obtain the system of recurrence relations for
$T_{i,j}=T_{i,j}^{[\mathcal{S}_1]}(z)$,
\begin{equation}
\label{DEEBNmarni1}
T_{i,j}(z)=1+z(T_{i-1,j}+T_{i,j+1}+T_{i+1,j-1}),\quad\text{for}\,\,i,j\ge 0,
\end{equation}
with initial values $T_{-1,j}=T_{i,-1}=0$ for $i,j\ge 0$.
We set $T_{i,j}=T(1-\rho_{i,j})$ with $\lim_{i,j\to\infty} \rho_{i,j}=0$,
and series $T$ given as the solution of the limiting equation $T=1+3zT$.
By the ansatz we obtain the following recurrence relation for $\rho_{i,j}$.
\begin{equation*}
\rho_{i,j}= z \big(\rho_{i-1,j}+\rho_{i,j+1}+\rho_{i+1,j}\big).
\end{equation*}
Since we assume that $\lim_{i,j\to\infty} \rho_{i,j}=0$, we use again the ansatz
\begin{equation*}
\rho_{i,j}=\alpha\cdot X(z)^{i}+\beta\cdot Y(z)^{j}+\gamma \cdot Z(z)^{i+j},
\end{equation*}
assuming that $|X|<1$, $|Y|<1$, and $|Z|<1$ for $z$ in a neighborhood of zero.
It turns out that all three series satisfy the same characteristic equation,
\begin{equation*}
X=z(1+X+X^2),\quad\text{or equivalently}\,\, X=\frac{1-z-\sqrt{(1+z)(1-3z)}}{2z},\quad\text{with}\,\, X=Y=Z.
\end{equation*}
Note that $X$ is the generating function of the Motzkin numbers. Hence, we obtain the family of solutions
\begin{equation*}
T_{i,j}= \frac{1}{1-3z}\Big(1-\alpha\cdot X^{i}-\beta\cdot X^{j} -\gamma \cdot X^{i+j} \Big).
\end{equation*}
Adapting to the initial conditions $T_{-1,j}=T_{i,-1}=0$, for $i,j\ge 0$, gives $
\alpha=\beta=X$ and $\gamma=-X^2$.
For the second step set $\mathcal{S}_2=\{(-1,0),(0,1),(1,0),(0,-1),(-1,1),(1,-1)\}$ we obtain the system of recurrence relations 
for $T_{i,j}=T_{i,j}^{[\mathcal{S}_s]}(z)$,
\begin{equation*}
T_{i,j}=1+z(T_{i-1,j}+T_{i+1,j}+T_{i,j+1}+T_{i,j-1}+T_{i-1,j+1}+T_{i+1,j-1}),\quad\text{for}\,\,i,j\ge 0,
\end{equation*}
with initial values $T_{-1,j}=T_{i,-1}=0$ for $i,j\ge 0$. This model is equivalent to the random turn model 
of three osculating Dyck walkers, and already solved in Corollary~\ref{DEEBNcorollDyck}.
\begin{theorem}
The length generating functions $T_{i,j}^{[\mathcal{S}_1]}(z)$ of paths confined to the quarter plane starting at $(i,j)\in\N_0\times\N_0$ with step set $\mathcal{S}_1=\{(-1,0),(0,1),(1,-1)\}$ is given by 
\begin{equation*}
T_{i,j}^{[\mathcal{S}_1]}(z)= \frac{1}{1-3z}\big(1-X_1^{i+1}\big)\big(1-X_1^{j+1}\big),\quad\text{with}\,\, X_1=z(1+X_1+X_1^2).
\end{equation*}
The length generating functions $T_{i,j}^{[\mathcal{S}_2]}(z)$ of paths confined to the quarter plane starting at $(i,j)\in\N_0\times\N_0$ with step set $\mathcal{S}_2=\{(-1,0),(0,1),(1,0),(0,-1),(-1,1),(1,-1)\}$ equals the length generating function 
of three osculating Dyck walkers in the random turn model, 
\begin{equation*}
T_{i,j}^{[\mathcal{S}_2]}(z)=T_{i,j}^{[\mathcal{S}_1]}(2z)= \frac{1}{1-6z}\big(1-X_2^{i+1}\big)\big(1-X_2^{j+1}\big),\quad\text{with}\,\, X_2=2z(1+X_2+X_2^2).
\end{equation*}
\end{theorem}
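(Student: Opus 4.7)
My strategy has two branches, one for each step set, both leveraging the Bouttier--Di Francesco--Guitter framework developed earlier in the paper.

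For $\mathcal{S}_1$, the proof is essentially the continuation of the computation immediately preceding the theorem. The ansatz $T_{i,j}=T(1-\rho_{i,j})$ with $T=1/(1-3z)$ has already been introduced, the superposition ansatz $\rho_{i,j}=\alpha X^i+\beta Y^j+\gamma Z^{i+j}$ has already been shown to force all three characteristic series to collapse onto the Motzkin generating function $X_1=z(1+X_1+X_1^2)$, and adapting to the boundary data $T_{-1,j}=T_{i,-1}=0$ has yielded $\alpha=\beta=X_1$ and $\gamma=-X_1^2$. All that remains is to substitute these values and factor
\begin{equation*}
1-X_1^{i+1}-X_1^{j+1}+X_1^{(i+1)+(j+1)}=(1-X_1^{i+1})(1-X_1^{j+1}),
\end{equation*}
which immediately produces the asserted product form.

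For $\mathcal{S}_2$ my plan is to identify the enumeration with the random turn model of three osculating Dyck walkers and then invoke Corollary~\ref{DEEBNcorollDyck}. Encoding three walkers with heights $y_1\le y_2\le y_3$ by their gap pair $(i,j)=(y_2-y_1,\,y_3-y_2)$, each of the six ``(walker, direction)'' choices in the random turn model induces exactly one step of $\mathcal{S}_2$: walker $1$ up/down give $(\mp1,0)$, walker $2$ up/down give $(\mp1,\pm1)$, and walker $3$ up/down give $(0,\pm1)$. The osculating (non-crossing) constraint translates exactly to $i,j\ge0$, matching the quarter-plane restriction together with the boundary data $T_{-1,j}=T_{i,-1}=0$. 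Hence Corollary~\ref{DEEBNcorollDyck} directly supplies the product formula with $X_2=2z(1+X_2+X_2^2)$, and the identity $T_{i,j}^{[\mathcal{S}_2]}(z)=T_{i,j}^{[\mathcal{S}_1]}(2z)$ follows at once since $X_2(z)=X_1(2z)$ and $1/(1-6z)=1/(1-3(2z))$.

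As a redundancy check one could instead run the method of Section~\ref{Franc} directly on $\mathcal{S}_2$: the linearized recurrence $\rho_{i,j}=z(\rho_{i-1,j}+\rho_{i+1,j}+\rho_{i,j-1}+\rho_{i,j+1}+\rho_{i-1,j+1}+\rho_{i+1,j-1})$ together with the ansatz $\rho_{i,j}=\alpha X^i+\beta Y^j+\gamma Z^{i+j}$ produces the single characteristic equation $1=2z(u^{-1}+1+u)$ for all three series, and the same boundary adaptation forces $\alpha=\beta=X_2$ and $\gamma=-X_2^2$. The only mild obstacle in both branches is to verify that three free parameters genuinely suffice to satisfy the doubly-infinite set of boundary conditions; fortunately the collapse of the three characteristic roots into a single series $X$ reduces each of the two boundary lines to just two scalar equations (the constant in $j$ and the coefficient of $X^j$, respectively the constant in $i$ and the coefficient of $X^i$), and one reads off directly that the system is consistent and uniquely determines $\alpha,\beta,\gamma$.
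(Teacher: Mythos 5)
Your proposal is correct and follows essentially the same route as the paper: for $\mathcal{S}_1$ it is exactly the computation carried out just before the theorem (ansatz $T_{i,j}=T(1-\rho_{i,j})$, collapse of $X,Y,Z$ onto the Motzkin series, boundary adaptation $\alpha=\beta=X_1$, $\gamma=-X_1^2$, then factoring), and for $\mathcal{S}_2$ it is the paper's identification with three osculating Dyck walkers in the random turn model via Corollary~\ref{DEEBNcorollDyck}, together with the observation $X_2(z)=X_1(2z)$. Your explicit gap-coordinate correspondence $(i,j)=(y_2-y_1,y_3-y_2)$ and the direct re-derivation for $\mathcal{S}_2$ merely flesh out details the paper only asserts (modulo a harmless swap of which walker move realizes $(\pm1,\mp1)$), so no substantive difference remains.
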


\section*{Conclusion}
We have shown that the ``asymptotic series method'' of Bouttier, Di Francesco and Guitter can be used to study
several families of embedded trees. Moreover, we used the method to study simple families of lattice path which can be considered as a degenerated family of embedded trees. Furthermore, we presented some other problems which can be solved using this method together with a suitable ansatz.

\end{document}